\newcommand{\balpha}{${\bm {\alpha}}$}
\def \Tr   {\text {\rm Tr}}
\def \diag   {\text {\rm diag}}
\def \det   {\text {\rm det}}
\def \coth   {\text {\rm coth}}
\begin{document}

\title*{The Symmetry Group of Gaussian States in $L^2 (\mathbb{R}^n)$}
\author{K. R. Parthasarathy}
\institute{K. R. Parthasarathy \at Indian Statistical Institute, 7, S. J. S. Sansanwal Marg, New Delhi - 110016, India, \email{krp@isid.ac.in}}
%\and Name of Second Author \at Name, Address of Institute \email{name@email.address}}

\maketitle

%\blambda {$\bf{\lambda }$}
%\newcommand{\balpha}{${\bm {\alpha}}$}
\def \balpha {${\bm {\alpha}}$}
\def \cov   {\text {\rm cov}}

\abstract{    This is a continuation of the expository article \cite{krp} with 
some new remarks. Let $S_n$ denote the set of all Gaussian states in the 
complex Hilbert space $L^2 (\mathbb{R}^n),$ $K_n$ the convex set of  all 
momentum and position covariance matrices of order $2n$ in Gaussian states 
and let $\mathcal{G}_n$ be the group of all unitary operators in  $L^2 (\mathbb{R}^n)$  conjugations by which leave $S_n$ invariant. Here we 
prove the following results. $K_n$ is a closed convex set for which a 
matrix $S$ is an extreme point if and only if $S=\frac{1}{2} L^{T} L$ 
for some $L$ in the symplectic group $Sp (2n, \mathbb{R}).$ Every element 
in $K_n$ is of the form $\frac{1}{2} (L^{T} L + M^{T} M)$ for some $L,M$ 
in $Sp (2n, \mathbb{R}).$ Every Gaussian state in $L^2 (\mathbb{R}^n)$ 
can be purified to a Gaussian state in $L^2 (\mathbb{R}^{2n}).$ Any 
element $U$  in the group $\mathcal{G}_n$ is of the form $U = \lambda 
W ({\bm {\alpha}}) \Gamma (L)$ where $\lambda$ is a complex scalar of modulus unity, ${\bm {\alpha}} \in \mathbb{C}^n,$ $L \in Sp (2n, \mathbb{R}),$  
$W({\bm {\alpha}})$ is the Weyl operator corresponding to ${\bm {\alpha}} $    and $\Gamma (L)$ is a unitary operator which implements the Bogolioubov automorphism of the Lie algebra generated by the canonical momentum and position observables induced by the symplectic linear transformation $L.$
}

\begin{acknowledgement}
The author thanks Professor R. Simon and Professor M. Krishna for several fruitful conversations on this subject during July-August 2010 and the Institute of Mathematical Sciences, Chennai for their warm hospitality. 
\end{acknowledgement}

\vskip0.1in
\noindent{\bf 2000 {\it \bf  Mathematics subject classification \quad}} 81S25; 60B15, 42A82, 81R30. 

\vskip0.1in

\begin{keywords}
Gaussian state, momentum and position observables, Weyl operators, symplectic group, Bogolioubov automorphism
\end{keywords}

\section{Introduction}
\label{sec:1}
\renewcommand{\theequation}{\thesection.\arabic{equation}}

In \cite{krp} we defined a quantum Gaussian state in $L^2 (\mathbb{R}^n)$ as a state in which every real linear combination of the canonical momentum and position observables $p_1, p_2, \ldots, p_n; $   $q_1, q_2, \ldots, q_n$ has a normal distribution on the real line. Such a state is uniquely determined by the expectation values of $p_1, p_2, \ldots, p_n;$ $q_1, q_2, \ldots, q_n$ and their covariance matrix of order $2n.$ A real positive definite matrix $S$ of order $2n$ is the covariance matrix of the observables   $p_1, p_2, \ldots, p_n;$ $q_1, q_2, \ldots, q_n$ if and only if the matrix inequality
\begin{equation}
2S - i \,\,\,J \ge 0    \label{eq1.1}
\end{equation}
holds where

\begin{equation}
J = \left [ \begin{array}{cc} 0 & -I \\ I & 0 \end{array} \right ],   \label{eq1.2}
\end{equation}
the right hand side being expressed in block notation with $0$ and $I$ being of order $n \times n.$ We denote by $K_n$ the set of all possible covariance matrices of the momentum and position observables in Gaussian states so that
\begin{equation}
K_n = \left \{\left . S \right | S \,\, \mbox{ is a real symmetric matrix of order} \,\,2n \,\, {\rm and} \,\, 2 S - iJ \ge 0 \right \}.  \label{eq1.3}
\end{equation}
Clearly, $K_n$ is a closed convex set. Here we shall show that $S$ is an extreme point of $K_n$ if and only if $S = \frac{1}{2} L^T L$ for some matrix $L$ in the real symplectic matrix group
\begin{equation}
Sp (2n , \mathbb{R}) = \left \{ \left . L \right | L^T JL = J \right \} \label{eq1.4}
\end{equation}
with the superfix $T$ indicating transpose. Furthermore, it turns out that every element $S$ in $K_n$ can be expressed as 
$$S = \frac{1}{2} (L^T L + M^T M)$$
for some $L, M \in Sp (2n, \mathbb{R}).$  This, in turn implies that any Gaussian state in $L^2 (\mathbb{R}^n)$ can be purified to a pure Gaussian state in $L^2 (\mathbb{R}^{2n}).$

Let ${\bm {\alpha}} \in (\alpha_1, \alpha_2, \ldots, \alpha_n)^T \in \mathbb{C}^n,$ $L = (( \ell_{ij})) \in Sp (2n, \mathbb{R})$ and let $\alpha_j = x_j + iy_{j}$  with $x_j, y_j \in \mathbb{R}.$ Define a new set of momentum and position observables $p_1^{\prime}, p_2^{\prime}, \ldots, p_n^{\prime};$ $q_1^{\prime}, q_2^{\prime}, \ldots, q_n^{\prime}$ by
\begin{eqnarray*}
p_i^{\prime} & =& \sum_{j=1}^n \left \{\ell_{ij} (p_j - x_j) + \ell_{i n +j} (q_j - y_j)  \right \}, \\
q_i^{\prime} &=& \sum_{j=1}^n \left \{\ell_{n+i \,j} (p_j - x_j) + \ell_{n+i \,\,n+j} (q_j - y_j)  \right \},
\end{eqnarray*}
 for $1 \leq i \leq n.$ Here one takes linear combinations and their respective closures to obtain $p_i^{\prime}, q_i^{\prime}$ as selfadjoint operator observables. Then $p_i^{\prime}, p_2^{\prime}, \ldots, p_n^{\prime};$ $q_i^{\prime}, q_2^{\prime}, \ldots, q_n^{\prime}$ obey the canonical commutation relations and thanks to the Stone-von Neumann uniqueness theorem there exists a unitary operator $\Gamma ({\bm {\alpha}}, L)$ satisfying
\begin{eqnarray*}
p_i^{\prime} &=& \Gamma ({\bm {\alpha}}, L) \,\,  p_i \,\, \Gamma  ({\bm {\alpha}}, L)^{\dagger}, \\
q_i^{\prime} &=& \Gamma ({\bm {\alpha}}, L)  \,\, q_i  \,\, \Gamma  ({\bm {\alpha}}, L)^{\dagger}
\end{eqnarray*}
for all $1 \leq i \leq n.$ Furthermore, such a $ \Gamma ({\bm {\alpha}}, L)$ is unique upto a scalar multiple of modulus unity. The correspondence $ ({\bm {\alpha}}, L) \rightarrow \Gamma ({\bm {\alpha}}, L)$ is a projective unitary and irreducible representation of the semidirect product group $\mathbb{C}^n \,\, \textcircled{S} \,\, Sp (2n, \mathbb{R}).$ Here  any element $L$ of $Sp (2n, \mathbb{R})$  acts on $\mathbb{C}^n$ real-linearly preserving the imaginary part of the scalar product. The operator $\Gamma ({\bm {\alpha}}, L)$ can be expressed as the product of $W ({\bm {\alpha}}) = \Gamma ({\bm {\alpha}}, I)$ and $\Gamma (L) = \Gamma ({\bm {0}}, L).$ Conjugations by $W ({\bm {\alpha}})$ implement translations of $p_j, q_j$ by scalars whereas conjugations by $\Gamma(L)$ implement symplectic linear transformations by elements of $Sp (2n, \mathbb{R}),$ which are the so-called Bogolioubov automorphisms of canonical commutation relations. In the last section we show that every  unitary operator $U$ in $L^2 (\mathbb{R}^n),$ with the property that $U \rho U^{\dagger}$ is a Gaussian state whenever $\rho$ is a Gaussian state, has the form $U = \lambda W ({\bm {\alpha}}) \Gamma (L)$ for some scalar $\lambda$ of modulus unity, a vector ${\bm {\alpha}}$ in $\mathbb{C}^n$ and a matrix $L$ in the group $Sp (2n, \mathbb{R}).$ 

The following two natural problems  that arise in the context of our note seem to be open. What is the most general unitary operator $U$ in $L^2 (\mathbb{R}^n)$ with the property that whenever $| \psi \rangle$ is a  pure Gaussian state so is $U | \psi \rangle?$ Secondly, what is the most general trace-preserving and completely positive linear map $\Lambda$  on the ideal of trace-class operators on $L^2 (\mathbb{R}^n)$ with the property that $\Lambda (\rho)$ is a Gaussian state whenever $\rho$ is a Gaussian state?

\section{Exponential vectors, Weyl operators, second quantization and the quantum Fourier transform}
\label{sec:2}
\setcounter{equation}{0}

For any ${\bm {z}} = (z_1, z_2, \ldots, z_n)^T$ in $\mathbb{C}^n$ define the associated {\it exponential vector} $e ({\bm {z}})$ in $L^2 (\mathbb{R}^n)$  by
\begin{equation}
e ({\bm {z}}) ({\bm {x}}) =  (2 \pi)^{-n/4} \exp \sum_{j=1}^n ({\bm {z}}_j {\bm {x}}_j - \frac{1}{2} {\bm {z}}_j^2 - \frac{1}{4}{\bm {x}}_j^2 ).   \label{eq2.1}
\end{equation}
Writing scalar products in the Dirac notation we have
\begin{eqnarray}
\langle e ({\bm {z}}) | e ({\bm {z}}^{\prime}) \rangle &=& \exp \langle {\bm {z}} | {\bm {z}}^{\prime} \rangle \nonumber \\
 &=& \exp \sum_{j=1}^n \bar{z}_j  z_j^{\prime}. \label{eq2.2}
\end{eqnarray}
The exponential vectors constitute a linearly independent and total set in the Hilbert space $L^2 (\mathbb{R}^n).$ If $U$ is a unitary matrix of order $n$ then there exists a unique unitary $\Gamma(U)$ in $L^2 (\mathbb{R}^n)$ satisfying
\begin{equation}
\Gamma (U) | e ({\bm {z}}) \rangle = | e (U {\bm {z}})\rangle  \quad \forall \,\,  {\bm {z}} \in \mathbb{C}^n.  \label{eq2.3}
\end{equation}
The operator $\Gamma(U)$ is called the {\it second quantization} of $U.$ For any two unitary matrices $U,V$ in the unitary group $\mathcal{U}(n)$ one has
$$\Gamma (U) \Gamma(V) = \Gamma(UV).$$
The correspondence $U \rightarrow \Gamma(U)$ is a strongly continuous unitary representation of the group $\mathcal{U}(n)$ of all unitary matrices of order $n.$ 

For any $ {\bm {\alpha}} \in \mathbb{C}^n$ there is a unique unitary operator $W ( {\bm {\alpha}})$ in $L^2 (\mathbb{R}^n)$ satisfying

\begin{equation}
W  ({\bm {\alpha}}) \,\,|e ( {\bm {z}}) \rangle = e^{-\frac{1}{2} \|{\bm {\alpha}}\|^{2} - \langle   {\bm {\alpha}} |  {\bm {z}} \rangle  } \,\, | e ( {\bm {z}} +  {\bm {\alpha}}  ) \rangle \quad \forall \quad   {\bm {z}} \in \mathbb{C}^n.\label{eq2.4}
\end{equation}
For any $ {\bm {\alpha}},  {\bm {\beta}}$ in $\mathbb{C}^n$ one has 
\begin{equation}
W ( {\bm {\alpha}}) \,\,W  ({\bm {\beta}}) = e^{-i \, {\rm Im} \langle {\bm {\alpha}} |  {\bm {\beta}} \rangle }  \,\,W ( {\bm {\alpha}}+{\bm {\beta}}  ). \label{eq2.5}
\end{equation}
The correspondence $ {\bm {\alpha}} \rightarrow W ( {\bm {\alpha}})$ is a projective unitary and irreducible representation of the additive group $\mathbb{C}^n.$ The operator $W( {\bm {\alpha}})$ is called the {\it Weyl operator} associated with  ${\bm {\alpha}}.$ As a consequence of  \eqref{eq2.5} it follows that the map $t \rightarrow W (t  {\bm {\alpha}}),$ $t \in \mathbb{R}$ is a strongly continuous one parameter unitary group admitting a selfadjoint Stone generator $p({\bm {\alpha}})$ such that 
\begin{equation}
W (t  {\bm {\alpha}}) = e^{-it p ( {\bm {\alpha}} )} \quad \forall \quad   {\bm {\alpha}} \in \mathbb{C}^n. \label{eq2.6}
\end{equation}
Writing $ {\bm {e}}_j = (0,0, \ldots, 0, 1, 0, \ldots, 0)^T $ with $1$ in the $j$-th position,
 \begin{eqnarray}
p_j &=& 2^{-\frac{1}{2}} \,\,p( {\bm {e}}_j), \quad q_j = -2^{-\frac{1}{2}} p (i{\bm {e}}_j) \label{eq2.7}   \\
a_j &=& \frac{q_j + i p_j}{\sqrt{2}}, \quad a_j^{\dagger} = \frac{q_j - ip_j}{\sqrt{2}}  \label{eq2.8}
\end{eqnarray}
one obtains a realization of the momentum and position observables $p_j, q_j, 1 \leq i \leq n$ obeying  the canonical commutation relations (CCR)
$$[p_i, p_j] = 0, \quad [q_i, q_j] = 0, \quad [q_r, p_s] = i \delta_{rs} $$
and the adjoint pairs $a_j,$ $a_j^{\dagger}$ of annihilation and creation operators satisfying
$$[a_i, a_j] = 0, \quad [a_i, a_j^{\dagger}] = \delta_{ij}$$
in appropriate domains. If we write
$$p_j^s = 2^{-\frac{1}{2}} p_j, \quad  q_j^s = 2^{\frac{1}{2}} q_j$$
one obtains the canonical Schr\"odinger pairs of momentum and position observables in the form
$$\left (p_j^s \psi \right ) ({\bm {x}}) = \frac{1}{i} \,\,\frac{\partial \psi}{\partial x_j} ({\bm {x}}), \left ( q_j^s \psi \right ) ({\bm {x}}) = x_j \,\,\psi ({\bm {x}})$$
in appropriate domains. We refer to \cite{krP} for more details.

We now introduce the sympletic group $Sp (2n, \mathbb{R})$ of real matrices of order $2n$ satisfying \eqref{eq1.4}. Any element of this group is called a symplectic matrix. As described in \cite{dms}, \cite{krp}, for any symplectic matrix $L$ there exists a unitary operator $\Gamma (L)$ satisfying
\begin{equation}
\Gamma(L) \,\,W( {\bm {\alpha}}) \,\,\Gamma(L)^{\dagger} = W (\tilde{L} {\bm {\alpha}} )  \quad \forall \quad  {\bm {\alpha}} \in \mathbb{C}^n \label{eq2.9}
\end{equation}
where
\begin{equation}
\left [\begin{array}{cc} {\rm Re} & \tilde{L}   {\bm {\alpha}} \\ {\rm Im} & \tilde{L}  {\bm {\alpha}} \end{array} \right ] = L \,\,\left [\begin{array}{cc}  {\rm Re} &   {\bm {\alpha}} \\ {\rm Im} &  {\bm {\alpha}} \end{array} \right ].  \label{eq2.10}
\end{equation}
Whenever the symplectic matrix $L$ is also a real orthogonal matrix then $\tilde{L}$ is a unitary matrix and $\Gamma(L)$ coincides with the second quantization $\Gamma (\tilde{L})$ of $\tilde{L}.$ Conversely, if $U$ is a unitary matrix of order $n,$ $L_U$ is the matrix satisfying
$$L_U \left [\begin{array}{c}  {\bm {x}} \\   {\bm {y}} \end{array} \right ]  =   \left [\begin{array}{cc} {\rm Re} & U ( {\bm {x}}+i  {\bm {y}} ) \\ {\rm Im} & U ( {\bm {x}} + i  {\bm {y}}    ) \end{array} \right ]$$ 
then $L_U$ is a symplectic and real orthogonal matrix of order $2n$ and $\Gamma(L_U) = \Gamma (U).$ Equations \eqref{eq2.9} and \eqref{eq2.6} imply that $\Gamma(L)$  implements the Bogolioubov automorphism determined by the symplectic matrix $L$ through conjugation.

For any state $\rho$ in $L^2 (\mathbb{R}^n)$ its {\it quantum Fourier transform} $\hat{\rho}$ is defined to be the complex-valued function on $\mathbb{C}^n$ given by
 
 \begin{equation}
\hat{\rho} ( {\bm {\alpha}}) = \Tr \,\,\rho \,W ( {\bm {\alpha}} ), \quad   {\bm {\alpha}} \in \mathbb{C}^n.\label{eq2.11}
\end{equation}
In \cite{krp} we have described a necessary and sufficient condition for a complex-valued function $f$ on $\mathbb{C}^n$ to be the quantum Fourier transform of a state in $L^2 (\mathbb{R}^n).$  Here we shall briefly describe an inversion formula for reconstructing $\rho$ from $\hat{\rho}.$ To this end we first observe that \eqref{eq2.11} is well defined whenever $\rho$ is any trace-class operator in $L^2 (\mathbb{R}).$  Denote by $\mathcal{F}_1$ and $\mathcal{F}_2$  respectively the ideals of trace-class and Hilbert-Schmidt operators in $L^2 (\mathbb{R}^n).$ Then $\mathcal{F}_1 \subset \mathcal{F}_2$ and $\mathcal{F}_2$ is a Hilbert space with the inner product $\langle A | B \rangle = \Tr \, A^{\dagger} B.$ There is a natural isomorphism between $\mathcal{F}_2$ and  $L^2 (\mathbb{R}^n)  \otimes L^2 (\mathbb{R}^n),$ which can, in turn, be identified with the Hilbert space of square integrable functions of two variables $ {\bm {x}},  {\bm {y}}$ in $\mathbb{R}^n.$ We denote this isomorphism by $\mathcal{I}$ so that $\mathcal{I} (A) ( {\bm {x}},  {\bm {y}})$ is a square integrable function of $( {\bm {x}},   {\bm {y}}  )$ for any $A \in \mathcal{F}_2$ and

\begin{equation}
\mathcal{I}   (| e ( {\bm {u}} )   \rangle \langle e  (\bar{{\bm {v}}}  )  |)   ( {\bm {x}},  {\bm {y}} ) = e ( {\bm {u}})  ( {\bm {x}}) e ( {\bm {v}}) ( {\bm {y}} )   \label{eq2.12}
\end{equation}
for all $ {\bm {u}},$ $  {\bm {v}} \in \mathbb{C}^n,$ $\bar{ {\bm {v}}}$ denoting $(\bar{v}_1, \bar{v}_2, \ldots, \bar{v}_n ).$  From \eqref{eq2.4} and \eqref{eq2.11} we have

\begin{eqnarray*}
(| e ( {\bm {u}} ) \rangle \langle e (\bar{ {\bm {v}} })|)^{\wedge} ( {\bm {\alpha}} ) &=& \langle  e (\bar{{\bm {v}}  }) \left | W ({\bm {\alpha}} ) \right | e ({\bm {u}})   \rangle \\
&=& \exp  \left \{-\frac{1}{2} \| {\bm {\alpha}} \|^2   - \langle  {\bm {\alpha}} |  {\bm {u}} \rangle + \langle  \bar{{\bm {v}}} | {\bm {\alpha}} \rangle + \langle {\bm {v}} |   {\bm {u}} \rangle         \right \}.
\end{eqnarray*}
Substituting $ {\bm {\alpha}} =  {\bm {x}} + i   {\bm {y}} $ and using \eqref{eq2.1}, the equation above, after some algebra, can be expressed as
\begin{equation}
(| e ( {\bm {u}} \rangle \langle e ( \bar{{\bm {v}}})| )^{\wedge} ( {\bm {x}} + i  {\bm {y}}) = (2 \pi)^{n/2} e ( {\bm {u}}^{\prime} )  (\sqrt{2} {\bm {x}})  e ({\bm {v}}^{\prime})  (\sqrt{2} {\bm {y}})  \label{eq2.13}
\end{equation}
where
\begin{eqnarray}
\left [\begin{array}{c}  {\bm {u}}^{\prime}  \\   {\bm {v}}^{\prime} \end{array} \right ] &=& U \left [\begin{array}{c} {\bm {u}} \\ {\bm {v}} \end{array} \right ],  \nonumber \\
U &=& 2^{-1/2} \left [ \begin{array}{cc}  -I & I \\ iI & iI \end{array}  \right ].  \label{eq2.14}
\end{eqnarray}
Let $D_{\theta}, \theta > 0$ denote the unitary dilation operator in $L^2 (\mathbb{R}^n) \otimes L^2 (\mathbb{R}^n)$ defined by

\begin{equation}
(D_{\theta} f) ( {\bm {x}},  {\bm {y}}) = \theta^{n} f (\theta {\bm {x}}, \theta {\bm {y}}).  \label{eq2.15}
\end{equation}
Then \eqref{eq2.13} can be expressed as
$$(| e ({\bm {u}}) \rangle \langle e (\bar{{\bm {v}}}|)^{\wedge} ({\bm {x}} + i {\bm {y}}) = \pi^{n/2} \left \{D_{\sqrt{2}} \Gamma (U) e ({\bm {u}} \otimes {\bm {v}})  \right \} ({\bm {x}}, {\bm {y}})  $$
where $\Gamma (U)$ is the second quantization operator in $L^2 (\mathbb{R}^{2n})$  associated with the unitary matrix $U$ in \eqref{eq2.14} of order $2n.$ Since exponential vectors are total and $D_{\sqrt{2}}$ and $\Gamma(U)$ are unitary we can express the quantum  Fourier transform $\rho \rightarrow \hat{\rho} ({\bm {x}} + i {\bm {y}})$ as
 \begin{equation}
\hat{\rho} = \pi^{n/2} D_{\sqrt{2}} \,\,\Gamma(U) \,\,\mathcal{I} (\rho).  \label{eq2.16}
\end{equation}
In particular, $\hat{\rho} ({\bm {x}} + i {\bm {y}})$ is a square integrable function of $({\bm {x}}, {\bm {y}}) \in \mathbb{R}^n \times \mathbb{R}^n$ and 
\begin{equation}
\rho = \pi^{-n/2} \,\,\mathcal{I}^{-1} \,\,\Gamma(U^{\dagger}) \,\,D_{2^{-1/2}} \,\,\hat{\rho}  \label{eq2.17}
\end{equation}
is the required inversion formula for the quantum Fourier transform. It is a curious but an elementary fact that the eigenvalues of $U$ in \eqref{eq2.14} are all 12$^{\rm th}$ roots of unity and hence the unitary operators $\Gamma(U)$ and $\Gamma(U^{\dagger})$ appearing in \eqref{eq2.16} and \eqref{eq2.17} have their 12-th powers equal to identity. This may be viewed as a quantum analogue of the classical fact that the 4-th power of the unitary Fourier transform in $L^2 (\mathbb{R}^n)$ is equal to identity.

\section{Gaussian states and their covariance matrices}
\label{sec:3}
\setcounter{equation}{0}

We begin by choosing and fixing the canonical momentum and position observables $p_1, p_2, \ldots, p_n;$ $q_1, q_2, \ldots, q_n$ as in equation \eqref{eq2.7} in terms of the Weyl operators. They obey the CCR. The closure of any real linear combination of the form $\sum\limits_{j=1}^n (x_j p_j - y_j q_j)$ is selfadjoint and we denote the resulting observable by the same symbol. As in \cite{krp}, for  ${\bm {\alpha}} = (\alpha_1, \alpha_2, \ldots, \alpha_n)^T,$ $\alpha_j = x_j + i y_j$ with $x_j,$ $y_j \in \mathbb{R},$ the Weyl operator $W ({\bm {\alpha}})$ defined in Section \ref{sec:2} can be expressed as
\begin{equation}
W ({\bm {\alpha}}) = e ^{-i \sqrt{2} \sum\limits_{j=1}^{n} (x_{j} p_{j} - y_{j} q_{j}  ) }.  \label{eq3.1}
\end{equation}
Sometimes it is useful to express $W ({\bm {\alpha}})$ in terms of the annihilation and creation operators defined by \eqref{eq2.8}: 

\begin{equation}
W ({\bm {\alpha}}) = e^{\sum\limits_{j=1}^{n} (\alpha_j a_{j}^{\dagger} - \bar{\alpha}_{j} a_{j}   )}  \label{eq3.2}
\end{equation}
where the linear combination in the exponent is the closed version. A state $\rho$ in $L^2 (\mathbb{R})$  is said to be {\it Gaussian} if every observable of the form $\sum\limits_{j=1}^n (x_j p_j - y_j q_j)$ has a normal distribution on the real line in the state $\rho$ for $x_j, y_j \in \mathbb{R}.$ From \cite{krp} we have the following theorem.

\begin{theorem} 
\label{thm3.1} 

A state $\rho$ in $L^2 (\mathbb{R}^n)$ is Gaussian if and only if its quantum Fourier transform $\hat{\rho}$ is given by

\begin{eqnarray}
\hat{\rho} ({\bm {\alpha}}) &=& \Tr \,\rho \,W({\bm {\alpha}}) \nonumber \\
&=& \exp \left \{ - i \sqrt{2} \left ({\bm {\ell}}^T {\bm {x}} - {\bm {m}}^T {\bm {y}} \right ) -  \left ({\bm {x}}^T, {\bm {y}}^T \right ) S \left ( \begin{array}{c} {\bm {x}} \\ {\bm {y}} \end{array} \right )  \right \}  \label{eq3.3}
\end{eqnarray}
for every ${\bm {\alpha}} ={\bm {x}} + i {\bm {y}},$ ${\bm {x}}, {\bm {y}} \in \mathbb{R}^n$ where ${\bm {\ell}},$ ${\bm {m}}$ are vectors in $\mathbb{R}^n$ and $S$ is a real positive definite matrix of order $2n$ satisfying the matrix inequality $2S - iJ \ge 0,$ with $J$ as in \eqref{eq1.2}.
\end{theorem}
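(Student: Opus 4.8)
The plan is to reduce the assertion to the classical fact that a probability measure on $\mathbb{R}$ is normal precisely when its Fourier transform has the form $t\mapsto\exp\{-ita-\tfrac12 t^{2}b\}$ with $a\in\mathbb{R}$ and $b\ge 0$, and then to identify $\bm{\ell},\bm{m}$ and $S$ with the first two moments of the momentum and position observables in the state. Concretely: by \eqref{eq2.6} and \eqref{eq3.1} one has $W(\bm{\alpha}) = e^{-ip(\bm{\alpha})}$ where $p(\bm{\alpha}) = \sqrt{2}\sum_{j=1}^{n}(x_j p_j - y_j q_j)$, and the map $\bm{\alpha}\mapsto p(\bm{\alpha})$ is real-linear from $\mathbb{C}^n$ into the self-adjoint operators, $p(\bm{\alpha}+\bm{\beta})$ being the closure of $p(\bm{\alpha})+p(\bm{\beta})$; this is part of the Stone--von Neumann description of the Weyl relations \eqref{eq2.5}, and the exponential vectors of Section \ref{sec:2} furnish a common core of analytic vectors on which all the computations below are valid. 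Consequently, for a fixed $\bm{\alpha}=\bm{x}+i\bm{y}$, the one-parameter unitary group $t\mapsto W(t\bm{\alpha}) = e^{-itp(\bm{\alpha})}$ is the Stone group of $p(\bm{\alpha})$, so that $t\mapsto\hat{\rho}(t\bm{\alpha}) = \Tr\,\rho\,W(t\bm{\alpha})$ is exactly the classical characteristic function of the distribution $\mu_{\bm{\alpha}}$ of the observable $p(\bm{\alpha})$ in $\rho$; hence $\rho$ is Gaussian if and only if $\mu_{\bm{\alpha}}$ is a normal law for every $\bm{\alpha}\in\mathbb{C}^n$.

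\emph{Necessity.} If $\rho$ is Gaussian, write $\mu_{\bm{\alpha}} = N(a(\bm{\alpha}),b(\bm{\alpha}))$ with $a(\bm{\alpha})\in\mathbb{R}$ the mean and $b(\bm{\alpha})\ge 0$ the variance of $p(\bm{\alpha})$; setting $t=1$ gives $\hat{\rho}(\bm{\alpha}) = \exp\{-ia(\bm{\alpha})-\tfrac12 b(\bm{\alpha})\}$. Each of $p_1,\dots,p_n,q_1,\dots,q_n$ is normally distributed in $\rho$, hence has finite moments of all orders, and on these observables the state expectation $\Tr\,\rho\,(\cdot)$ is linear (again using the common analytic core); therefore, since $p(\bm{\alpha})$ depends real-linearly on $\bm{\alpha}$, the mean $a(\bm{\alpha}) = \Tr\,\rho\,p(\bm{\alpha})$ is a real-linear functional of $(\bm{x},\bm{y})$ and the variance $b(\bm{\alpha}) = \Tr\,\rho\,p(\bm{\alpha})^{2} - a(\bm{\alpha})^{2}$ is a non-negative quadratic form in $(\bm{x},\bm{y})$; writing $a(\bm{\alpha}) = \sqrt{2}(\bm{\ell}^{T}\bm{x}-\bm{m}^{T}\bm{y})$ and $\tfrac12 b(\bm{\alpha}) = (\bm{x}^{T},\bm{y}^{T})\,S\left(\begin{array}{c}\bm{x}\\\bm{y}\end{array}\right)$ for suitable $\bm{\ell},\bm{m}\in\mathbb{R}^{n}$ and a real symmetric $S\ge 0$ yields \eqref{eq3.3}. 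To obtain $2S-iJ\ge 0$ I would use that the moment matrix $M = ((\Tr\,\rho\,R_i'R_j'))$ of the centred observables $R_i' = R_i-(\Tr\,\rho\,R_i)I$, where $(R_1,\dots,R_{2n}) = (p_1,\dots,p_n,q_1,\dots,q_n)$, is Hermitian and positive semidefinite because $\Tr\,\rho\,Z^{\dagger}Z\ge 0$ for $Z=\sum_j c_j R_j'$; its symmetric and antisymmetric parts are, respectively, the matrix $S$ of \eqref{eq3.3} (up to a sign convention in the off-diagonal blocks) and $\tfrac{i}{2}J$ --- the latter by the canonical commutation relations $[R_i,R_j]=iJ_{ij}I$ --- so that $2S-iJ\ge 0$. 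Positive definiteness of $S$ then follows: if $S\bm{v}=\bm{0}$ for some real $\bm{v}\neq\bm{0}$, then $\bm{v}^{T}(2S-iJ)\bm{v}=0$ forces $(2S-iJ)\bm{v}=\bm{0}$, hence $J\bm{v}=\bm{0}$, contradicting the invertibility of $J$.

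\emph{Sufficiency.} Conversely, suppose $\hat{\rho}$ has the form \eqref{eq3.3}. Then, for fixed $\bm{\alpha}=\bm{x}+i\bm{y}$, the characteristic function of $\mu_{\bm{\alpha}}$ is
\[
\int_{\mathbb{R}} e^{-it\lambda}\,\mu_{\bm{\alpha}}(d\lambda) = \Tr\,\rho\,W(t\bm{\alpha}) = \hat{\rho}(t\bm{\alpha}) = \exp\left\{-it\sqrt{2}\,(\bm{\ell}^{T}\bm{x}-\bm{m}^{T}\bm{y}) - t^{2}\,(\bm{x}^{T},\bm{y}^{T})\,S\left(\begin{array}{c}\bm{x}\\\bm{y}\end{array}\right)\right\},
\]
which is precisely the characteristic function of the normal law with mean $\sqrt{2}(\bm{\ell}^{T}\bm{x}-\bm{m}^{T}\bm{y})$ and variance $2\,(\bm{x}^{T},\bm{y}^{T})\,S\left(\begin{array}{c}\bm{x}\\\bm{y}\end{array}\right)$ (non-negative since $S\ge 0$). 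By the uniqueness theorem for characteristic functions $\mu_{\bm{\alpha}}$ is this normal law; hence every $p(\bm{\alpha})$ --- equivalently every $\sum_{j}(x_j p_j - y_j q_j)$ --- is normally distributed in $\rho$, i.e.\ $\rho$ is Gaussian.

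\emph{The main obstacle.} I expect the only genuinely delicate point to be the functional-analytic content of the reduction: one must verify that ``the distribution of $\sum_j(x_j p_j - y_j q_j)$ in $\rho$'' is well defined and that these observables form an honest self-adjoint linear family, so that Stone's theorem legitimately turns $\hat{\rho}(t\bm{\alpha})$ into a classical characteristic function and the moment manipulations in the necessity part are justified; this rests on the Stone--von Neumann uniqueness theorem together with the totality and analyticity of the exponential vectors of Section \ref{sec:2}. Granting that, the rest is routine bookkeeping --- including the innocuous sign convention by which the matrix $S$ of \eqref{eq3.3} is read off from the covariance matrix of $p_1,\dots,p_n,q_1,\dots,q_n$.
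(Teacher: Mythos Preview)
Your argument is correct and is in fact the standard route to this theorem. Note, however, that the paper does not give a proof here at all: its ``proof'' consists of the single sentence ``We refer to the proof of Theorem~3.1 in \cite{krp}.'' So there is nothing in the present paper to compare your argument against beyond that citation. What you have written is essentially the expected content of that reference: identify $t\mapsto\hat{\rho}(t\bm{\alpha})$ with the classical characteristic function of $p(\bm{\alpha})$ via Stone's theorem, read off the Gaussian form for necessity, deduce $2S-iJ\ge 0$ from positivity of the non-commutative second-moment matrix whose antisymmetric part is fixed by the CCR, and run the characteristic-function argument backwards for sufficiency.

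Two small remarks. First, your derivation of strict positive definiteness of $S$ is right but deserves one extra word: for real $\bm{v}$ one has $\bm{v}^{T}J\bm{v}=0$ automatically (since $J$ is skew), so $S\bm{v}=0$ already gives $\bm{v}^{T}(2S-iJ)\bm{v}=0$; then positivity of the Hermitian matrix $2S-iJ$ forces $(2S-iJ)\bm{v}=0$, whence $J\bm{v}=0$, a contradiction. Second, the sign bookkeeping you flag is exactly the remark the paper makes just after the theorem: $S$ is the covariance matrix of $(p_1,\ldots,p_n;-q_1,\ldots,-q_n)$, and since $2S-iJ\ge 0$ is equivalent to its complex conjugate $2S+iJ\ge 0$, the choice of sign convention is indeed immaterial for the matrix inequality.
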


\begin{proof}
We refer to the proof of Theorem 3.1 in \cite{krp}.\qed
\end{proof}

We remark that ${\bm {\ell}},$  ${\bm {m}}$ and $S$ in \eqref{eq3.3} are defined by the equations
\begin{eqnarray*}
{\bm {\ell}}^{T} {\bm {x}} - {\bm {m}}^T {\bm {y}} &=& \Tr \,\,\rho \sum_{j=1}^n (x_j p_j - y_j q_j) \\
({\bm {x}}^T, {\bm {y}}^T) S \left (\begin{array}{c} {\bm {x}} \\  {\bm {y}}  \end{array} \right ) &=& \Tr \,\,\rho \,\,X^2 - (\Tr \,\, \rho X)^2, X = \sum_{j=1}^n (x_j p_j - y_j q_j). 
\end{eqnarray*}
It is clear that $\ell_j$ is the expectation value of $p_j,$ $m_j$ is the expectation value of $q_j$ and $S$ is the covariance matrix of $p_1, p_2, \ldots, p_n;$ $-q_1, -q_2, \ldots, -q_n$ in the state $\rho$ defined by \eqref{eq3.3}. By a slight abuse of language we call $S$ the covariance matrix of the Gaussian state $\rho.$ All such Gaussian covariance matrices constitute the convex set $K_n$ defined already in \eqref{eq1.3}. We shall now investigate some properties of this convex set.

\begin{proposition}[Williamson's normal form \cite{dms}]\label{prop3.2}
Let $A$ be any real strictly positive definite matrix of order $2n.$ Then there exists a unique diagonal matrix $D$ of order $n$ with diagonal entries $d_1 \ge d_2 \ge \cdots \ge d_n > 0$ and a symplectic matrix $M$ in $Sp (2n, \mathbb{R})$ such that
\end{proposition}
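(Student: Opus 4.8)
The plan is to run the standard argument organised around the real skew\nobreakdash-symmetric matrix $B := A^{1/2} J A^{1/2}$, where $A^{1/2}$ is the unique positive definite square root of $A$. First I would note that it is enough to produce an $M \in Sp(2n,\mathbb{R})$ for which $M A^{-1} M^{T}$ is block diagonal of the form $\diag(N^{-1},N^{-1})$ with $N$ a positive diagonal matrix, because inverting then gives $A = M^{T}\diag(N,N)M$ and one sets $D := N$, reordering its diagonal so that $d_{1}\ge d_{2}\ge\cdots\ge d_{n}>0$. Observe that $B^{T} = A^{1/2}J^{T}A^{1/2} = -B$, so $B$ is skew\nobreakdash-symmetric, and $B$ is nonsingular; hence $iB$ is Hermitian and nonsingular, so the eigenvalues of $B$ occur in conjugate pairs $\pm i\nu_{1},\ldots,\pm i\nu_{n}$ with every $\nu_{j}>0$.

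Next I would invoke the orthogonal canonical form for real skew\nobreakdash-symmetric matrices: there is a real orthogonal $O$ and, after a suitable permutation of basis vectors and with the $\nu_{j}$ listed so that $\nu_{1}\ge\cdots\ge\nu_{n}>0$, writing $N = \diag(\nu_{1},\ldots,\nu_{n})$,
\begin{equation*}
O^{T} B O = \begin{pmatrix} 0 & -N \\ N & 0 \end{pmatrix}.
\end{equation*}
Put $\mathcal{N} := \diag(N^{-1/2},N^{-1/2})$; a one\nobreakdash-line computation gives $\mathcal{N}\,O^{T}BO\,\mathcal{N} = J$ with $J$ as in \eqref{eq1.2}. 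I then define the real matrix $M := \mathcal{N}\,O^{T}A^{1/2}$. Using the symmetry of $\mathcal{N}$ and of $A^{1/2}$ one gets $M J M^{T} = \mathcal{N}\,O^{T}(A^{1/2}JA^{1/2})O\,\mathcal{N} = \mathcal{N}\,O^{T}BO\,\mathcal{N} = J$, which is equivalent to $M^{T}JM = J$, i.e. $M \in Sp(2n,\mathbb{R})$ in the sense of \eqref{eq1.4}. Moreover $M A^{-1} M^{T} = \mathcal{N}\,O^{T}A^{1/2}A^{-1}A^{1/2}O\,\mathcal{N} = \mathcal{N}\,O^{T}O\,\mathcal{N} = \mathcal{N}^{2} = \diag(N^{-1},N^{-1})$, and hence $A = M^{T}\diag(N,N)M$ with $D := N$. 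This proves existence.

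For uniqueness of $D$ I would show that the $d_{j}$ are intrinsic spectral invariants of $A$. If $A = M^{T}\diag(D,D)M$ with $M$ symplectic, then conjugating $AJ$ by $(M^{T})^{-1}$ and using $MJM^{T}=J$ yields $(M^{T})^{-1}(AJ)M^{T} = \diag(D,D)J = \begin{pmatrix} 0 & -D \\ D & 0\end{pmatrix}$, whose spectrum is exactly $\{\pm i d_{j}: 1\le j\le n\}$. Thus the multiset $\{d_{j}\}$ is precisely the set of moduli of the (purely imaginary) eigenvalues of $AJ$, which is independent of the chosen factorisation; imposing $d_{1}\ge\cdots\ge d_{n}$ then pins $D$ down uniquely. (The matrix $M$ itself is of course not unique, since one may multiply it on the left by any symplectic matrix commuting with $\diag(D,D)$.)

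The part I expect to need the most care is the bookkeeping in the skew\nobreakdash-symmetric normal form: choosing the basis permutation and block signs so that $\mathcal{N}O^{T}BO\mathcal{N}$ lands exactly on the specific $J$ of \eqref{eq1.2} rather than on $-J$ or a reshuffled variant, and verifying the symplectic identity for $M$ in the paper's convention $L^{T}JL=J$. Once $B = A^{1/2}JA^{1/2}$ has been put in canonical form, the remaining manipulations are routine.
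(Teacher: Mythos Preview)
Your proof is correct and follows essentially the same route as the paper: both form $B=A^{1/2}JA^{1/2}$, bring it to skew\nobreakdash-symmetric canonical form by an orthogonal matrix, and rescale by $\diag(D^{-1/2},D^{-1/2})$ to produce the symplectic factor (your $M=\mathcal{N}O^{T}A^{1/2}$ is exactly the transpose of the paper's $L$). For uniqueness the paper compares two Williamson decompositions via the intertwining symplectic $N=MM'^{-1}$ and reads off $D=D'$ from the eigenvalues of $N^{-1}\!\begin{pmatrix}0&D\\-D&0\end{pmatrix}\!N$, whereas you identify the $d_{j}$ directly as the moduli of the eigenvalues of $AJ$; these are the same similarity argument packaged slightly differently.
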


\begin{equation}
A = M^T \,\,\left [\begin{array}{cc} D & 0 \\ 0 & D \end{array} \right ] \,M.  \label{eq3.4}
\end{equation}

\begin{proof}
Define
$$B = A^{1/2} \,\, J\,A^{1/2}$$
where $J$ is given by \eqref{eq1.2}. Then $B$ is a real skew symmetric matrix of full rank. Hence its eigenvalues, inclusive of multiplicity, can be arranged as $\pm id_1, \pm id_2, \ldots, \pm id_n$ where $d_1 \ge d_2 \ge \cdots \ge d_n > 0.$ Define $D = \diag (d_1, d_2, \ldots, d_n),$ i.e., the diagonal matrix with $d_i$ as the $ii$-th entry for $1 \leq i \leq n.$ Then there exists a real orthogonal matrix $\Gamma$ of order $2n$ such that
$$ \Gamma^T \,B\,\Gamma = \left [\begin{array}{cc} 0 & -D \\ D & 0 \end{array} \right ].$$
Define
$$L = A^{1/2} \,\Gamma \left [\begin{array}{cc} D^{-1/2}  & 0 \\ 0 & D^{-1/2} \end{array} \right ].$$

Then $L^T J L = J$ and
$$L A L^T =  \left [\begin{array}{cc} D & 0 \\ 0 & D \end{array} \right ].$$
Putting $M = (L^{-1})^T$ we obtain \eqref{eq3.4}.

To prove the uniqueness of $D,$ suppose $D^{\prime} = \diag (d_1^{\prime}, d_2^{\prime}, \ldots, d_n^{\prime})$ with $d_1^{\prime} \ge d_2^{\prime} \ge \cdots \ge d_n^{\prime} > 0$ and $M^{\prime}$ is another symplectic matrix of order $2n$ such that
$$A = M^T  \left [\begin{array}{cc} D & 0 \\ 0 & D \end{array} \right ] \,M = M^{\prime^{T}} \left [\begin{array}{cc} D^{\prime} & 0 \\ 0 & D^{\prime} \end{array} \right ] M^{\prime}.$$
Putting $N = MM^{^{\prime^{-1}}}$ we get a symplectic $N$ such that
$$N^T  \left [\begin{array}{cc} D & 0 \\ 0 & D \end{array} \right ]N = \left [\begin{array}{cc} D^{\prime} & 0 \\ 0 & D^{\prime} \end{array} \right ].$$ 
Substituting $N^T = JN^{-1} J^{-1}$ we get
$$N^{-1} \left [\begin{array}{cc} 0 & D \\ -D & 0 \end{array} \right ] N = \left [\begin{array}{cc} 0 & D^{\prime} \\ -D^{\prime} & 0 \end{array} \right ].$$
Identifying the eigenvalues on both sides we get $D = D^{\prime}$ \qed
\end{proof}

\begin{theorem}\label{thm3.3}
A real positive definite matrix $S$ is in $K_n$ if and only if there exists a diagonal matrix $D = \diag (d_1, d_2, \ldots, d_n)$ with $d_1 \ge d_2 \ge \cdots \ge d_n \ge \frac{1}{2}$ and a symplectic matrix $M \in Sp (2n, \mathbb{R})$ such that

\begin{equation}
S = M^T   \left [\begin{array}{cc} D & 0 \\ 0 & D \end{array} \right ] M.\label{eq3.5}
\end{equation}
In particular,

\begin{equation}
\det \,S = \prod_{i}^n d_j^2 \ge 4^{-n}.  \label{eq3.6}
\end{equation}
\end{theorem}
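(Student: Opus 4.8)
The plan is to obtain Theorem~\ref{thm3.3} by combining Williamson's normal form (Proposition~\ref{prop3.2}) with the defining identity $M^T J M = J$ of the symplectic group. First I would note that every $S\in K_n$ is automatically \emph{strictly} positive definite, which is what legitimizes the appeal to Proposition~\ref{prop3.2}: for real symmetric $S$ with $2S-iJ\ge 0$, evaluating the quadratic form on a vector $u+iv$ with $u,v\in\mathbb{R}^{2n}$ and using $S=S^T$, $J^T=-J$ collapses it to $u^TSu+v^TSv+u^TJv\ge 0$ for all real $u,v$; taking $v=0$ gives $S\ge 0$, and if $Su_0=0$ for some $u_0\ne 0$ then scaling $v$ and letting it tend to $0$ forces $u_0^TJv=0$ for all $v$, hence $u_0=0$ because $J$ is nonsingular — a contradiction. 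So $S>0$ and Proposition~\ref{prop3.2} yields $S=M^T\left[\begin{array}{cc} D & 0\\ 0 & D\end{array}\right]M$ with $d_1\ge\cdots\ge d_n>0$ and $M\in Sp(2n,\mathbb{R})$.

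The crux is then to transport the constraint $2S-iJ\ge 0$ through this factorization. Since $M^TJM=J$, one has $2S-iJ=M^T\!\left(2\left[\begin{array}{cc} D & 0\\ 0 & D\end{array}\right]-iJ\right)\!M$, and because $M$ is real and invertible, conjugation by $M$ preserves positive semidefiniteness; hence $2S-iJ\ge 0$ if and only if $\left[\begin{array}{cc} 2D & iI\\ -iI & 2D\end{array}\right]\ge 0$. After the permutation of coordinates that pairs index $j$ with $n+j$, this $2n\times 2n$ matrix becomes block diagonal with $2\times 2$ blocks $\left[\begin{array}{cc} 2d_j & i\\ -i & 2d_j\end{array}\right]$, whose eigenvalues are $2d_j\pm 1$; so positivity holds precisely when $2d_j-1\ge 0$ for every $j$, i.e. (using $d_1\ge\cdots\ge d_n$) when $d_n\ge\tfrac12$. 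This settles both implications simultaneously: ``only if'' is the forward reading, and ``if'' follows since any such $D$ produces via the same identity an $S$ with $2S-iJ\ge 0$ that is visibly positive definite. For \eqref{eq3.6}, taking determinants and using $(\det M)^2=1$ (immediate from $M^TJM=J$) gives $\det S=(\det D)^2=\prod_{j=1}^n d_j^2\ge(\tfrac12)^{2n}=4^{-n}$.

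The step I expect to carry the whole argument is the invariance identity $2S-iJ=M^T\big(2\,\diag(D,D)-iJ\big)M$: it works precisely because the ``quantum'' term $iJ$ is exactly the object fixed by $Sp(2n,\mathbb{R})$, so a single conjugation by $M$ simultaneously brings the covariance part $S$ to Williamson normal form and leaves the uncertainty term untouched. Everything after that — the $2\times 2$ spectral computation and the determinant bound — is elementary, and the positive-definiteness argument in the first paragraph, while routine, is the only other point requiring care since it is what permits the use of Proposition~\ref{prop3.2}.
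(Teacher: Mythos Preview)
Your argument is correct and follows essentially the same route as the paper: apply Williamson's normal form (Proposition~\ref{prop3.2}), transport the constraint $2S-iJ\ge 0$ through the factorization using $M^TJM=J$, and reduce to the $2\times 2$ blocks $\left[\begin{smallmatrix}2d_j & i\\ -i & 2d_j\end{smallmatrix}\right]$. The one point of divergence is the proof that every $S\in K_n$ is strictly positive definite: the paper does this indirectly by applying the already-proved determinant bound to $S+\varepsilon I$ and letting $\varepsilon\to 0$, whereas your direct quadratic-form argument (evaluate $2S-iJ$ on $u+iv$, use skew-symmetry of $J$, then scale $v$) is cleaner and self-contained.
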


\begin{proof}
Let $S$ be a real strictly positive definite matrix in $K_n.$ From \eqref{eq1.3} we have $S \ge \frac{i}{2}J$ and therefore, for any $L \in Sp (2n, \mathbb{R}),$ 
\begin{equation}
L^T \,S\,L \ge \frac{i}{2}J.  \label{eq3.7}
\end{equation}
Using Proposition \ref{prop3.2} choose $L$ so that
$$L^T \,S\,L = \left [\begin{array}{cc} D & 0 \\ 0 & D \end{array} \right ]$$
where $D = \diag (d_1, d_2, \ldots, d_n),$ $d_1 \ge d_2 \ge \cdots \ge d_n > 0.$ Now \eqref{eq3.7} implies
$$\left [\begin{array}{cc} D & \frac{i}{2}I \\ -\frac{i}{2}I  & D \end{array} \right ] \ge 0. $$
The minor of second order in the left hand side arising from the $jj,$ $j\,n+j,$ $n+jj,$ $n+j \,\,n+j$ entries is $d_j^2 - \frac{1}{4} \ge 0.$ Choosing $L = M^{-1}$ we obtain \eqref{eq3.5} and \eqref{eq3.6}. Now we drop the assumption of strict positive definiteness on $S.$ From the definition of $K_n$ in \eqref{eq1.3} it follows that for any $S \in K_n$ one has $S + \varepsilon I \in K_n$ for every $\varepsilon > 0.$ Since $S+\varepsilon I$ is strictly positive definite $\det \,S+\varepsilon I \ge 4^{-n}$ $\forall$ $\varepsilon > 0.$ Letting $\varepsilon \rightarrow 0$ we see that \eqref{eq3.6} holds and $S$ is strictly positive definite.

To prove the converse, consider an arbitrary diagonal matrix $D = \diag (d_1, d_2, \ldots, d_n)$ with $d_1 \ge d_2 \ge \cdots \ge d_n \ge \frac{1}{2}.$ Clearly
$$2  \left [\begin{array}{cc} D & 0 \\ 0 & D \end{array} \right ] - i \left [\begin{array}{cc} 0 & -I \\ I & 0 \end{array} \right ] \ge 0,$$
and hence for any $M \in Sp (2n, \mathbb{R})$
$$2 M^T \left [\begin{array}{cc} D & 0 \\ 0 & D \end{array} \right ] M - i  \left [\begin{array}{cc} 0 & -I \\ I & 0 \end{array} \right ] \ge 0.$$
In other words,
$$M^T \left [\begin{array}{cc} D & 0 \\ 0 & D \end{array} \right ] M \in K_n \quad \,\,M \in Sp (2n, \mathbb{R}). $$
Finally, the uniqueness of the parameters $d_1 \ge d_2 \ge \cdots \ge d_n \ge \frac{1}{2}$ in the theorem is a consequence of Proposition \ref{prop3.2}. \qed
\end{proof}

We now prove an elementary lemma on diagonal matrices before the statement of our next result on the convex set $K_n.$

\begin{lemma}\label{lm3.4}
Let $D \ge I$ be a positive diagonal matrix of order $n.$ Then there exist positive diagonal matrices $D_1, D_2$ such that
$$D = \frac{1}{2} (D_1 + D_2) = \frac{1}{2} (D_1^{-1} + D_2^{-1}). $$
\end{lemma}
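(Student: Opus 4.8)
The plan is to reduce everything to a scalar problem, entry by entry. Since $D$ and all the matrices we seek are diagonal of the same order $n$, they commute, so it suffices to treat each diagonal entry separately. Write $D = \diag(d_1, d_2, \ldots, d_n)$ with $d_j \ge 1$ for every $j$; I then want, for each $j$, positive reals $a_j, b_j$ with $d_j = \tfrac12(a_j + b_j)$ and $d_j = \tfrac12(a_j^{-1} + b_j^{-1})$, after which $D_1 = \diag(a_1, \ldots, a_n)$ and $D_2 = \diag(b_1, \ldots, b_n)$ do the job.

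The key observation is that the second (harmonic) condition reads $\tfrac12(a^{-1}+b^{-1}) = \tfrac{a+b}{2ab}$, which agrees with the first (arithmetic) condition $\tfrac{a+b}{2}$ exactly when $ab = 1$. So I would search for solutions with $b = a^{-1}$; both requirements then collapse to the single equation $a + a^{-1} = 2d$, i.e. the quadratic $a^2 - 2da + 1 = 0$, whose roots are $a = d \pm \sqrt{d^2 - 1}$. These roots are real and strictly positive precisely because $d \ge 1$: the discriminant $d^2 - 1$ is nonnegative, and the product of the roots equals $1 > 0$. Thus $a_j = d_j + \sqrt{d_j^2 - 1}$ and $b_j = d_j - \sqrt{d_j^2 - 1}$ work.

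In matrix form this is simply $D_2 = D_1^{-1}$ with $D_1 = D + (D^2 - I)^{1/2}$, where $(D^2 - I)^{1/2}$ denotes the entrywise nonnegative square root, well defined since $D \ge I$. One checks $D_1 D_2 = \bigl(D + (D^2-I)^{1/2}\bigr)\bigl(D - (D^2-I)^{1/2}\bigr) = I$, so $D_2$ is positive; then $\tfrac12(D_1 + D_2) = D$ and $\tfrac12(D_1^{-1} + D_2^{-1}) = \tfrac12(D_2 + D_1) = D$, as required. There is no genuine obstacle here — the construction is elementary; the only point to keep in view is that the hypothesis $D \ge I$ is exactly what is needed to keep the square root, and hence $D_1, D_2$, real and positive.
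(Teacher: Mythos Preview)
Your proof is correct. The paper's argument is in the same spirit --- it too reduces to solving a quadratic over the diagonal entries --- but proceeds a bit less directly: instead of observing at the outset that the two conditions force $D_1 D_2 = I$, it writes $D_2 = D_1 X$, eliminates $D_1$ to obtain the quadratic $X^2 + (2I - 4D^2)X + I = 0$, solves for a positive diagonal $X$, and then sets $D_1 = 2D(I+X)^{-1}$, $D_2 = D_1 X$. Your remark that $\tfrac12(a+b) = \tfrac12(a^{-1}+b^{-1})$ with $a,b>0$ already forces $ab=1$ is a cleaner entry point, collapsing everything to the simpler quadratic $a^2 - 2da + 1 = 0$; the resulting $D_1, D_2$ coincide with the paper's, since one can check a posteriori that the paper's solution also satisfies $D_1 D_2 = I$.
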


\begin{proof}
We write $D_2 = D_1 X$ and solve for $D_1$ and $X$ so that
$$2 D = D_1 (I+X) = D_1^{-1} (I + X^{-1}), $$
$D_1$ and $X$ being diagonal. Eliminating $D_1$ we get the equation
$$(I+X) (I+X^{-1}) = 4 D^2$$
which reduces to the quadratic equation
$$X^2 + (2 - 4 D^2) X + I = 0.$$
Solving for $X$ we do get a positive diagonal matrix solution
$$X  = I + 2 (D^2-1) + 2D (D^2-I)^{1/2}.$$
Writing 
$$D_1 = 2D (I+X)^{-1}, \quad D_2 = D_1 X$$
we get $D_1, D_2$ satisfying the required property.\qed
\end{proof}

\begin{theorem}\label{thm3.5}
A real positive definite matrix $S$ of order $2n$ belongs to $K_n$ if and only if there exist symplectic matrices $L,M$ such that
$$S = \frac{1}{4} (L^T L + M^T M).$$
Furthermore, $S$ is an extreme point of $K_n$ if and only if $S = \frac{1}{2} L^T L$ for some symplectic matrix $L.$
\end{theorem}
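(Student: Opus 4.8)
\medskip
\noindent\textbf{Proof proposal.} The plan is to establish the membership characterization first and then read the extreme points off from it; the only nontrivial inputs are Williamson's normal form (Proposition~\ref{prop3.2} and Theorem~\ref{thm3.3}) and the elementary diagonal identity of Lemma~\ref{lm3.4}. I would begin with a building block: for every $L\in Sp(2n,\mathbb{R})$ the matrix $\tfrac12 L^{T}L$ lies in $K_n$, because
\[
\tfrac12 L^{T}L \;=\; L^{T}\left[\begin{array}{cc}\tfrac12 I & 0\\[2pt] 0 & \tfrac12 I\end{array}\right]L
\]
is of the form \eqref{eq3.5} with all Williamson parameters equal to $\tfrac12$; by \eqref{eq3.6} it moreover has the minimal determinant $4^{-n}$. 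The ``if'' half of the first assertion is then immediate: if $S=\tfrac14(L^{T}L+M^{T}M)$ with $L,M$ symplectic, then $S=\tfrac12\bigl(\tfrac12 L^{T}L\bigr)+\tfrac12\bigl(\tfrac12 M^{T}M\bigr)$ is a convex combination of two members of the convex set $K_n$.

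For the ``only if'' half, given $S\in K_n$ I would apply Theorem~\ref{thm3.3} to write $S=N^{T}\left[\begin{smallmatrix}D & 0\\ 0 & D\end{smallmatrix}\right]N$ with $N$ symplectic and $D=\diag(d_1,\dots,d_n)$, $d_i\ge\tfrac12$; equivalently $2D\ge I$. Applying Lemma~\ref{lm3.4} to the positive diagonal matrix $2D$ produces positive diagonal matrices $D_1,D_2$ with $4D=D_1+D_2=D_1^{-1}+D_2^{-1}$. Since $\left[\begin{smallmatrix}\Delta & 0\\ 0 & \Delta^{-1}\end{smallmatrix}\right]$ is symplectic for every positive diagonal $\Delta$, the matrices
\[
L_0=\left[\begin{array}{cc}D_1^{1/2} & 0\\[2pt] 0 & D_1^{-1/2}\end{array}\right],\qquad
M_0=\left[\begin{array}{cc}D_2^{1/2} & 0\\[2pt] 0 & D_2^{-1/2}\end{array}\right]
\]
lie in $Sp(2n,\mathbb{R})$ and satisfy $L_0^{T}L_0+M_0^{T}M_0=\left[\begin{smallmatrix}D_1+D_2 & 0\\ 0 & D_1^{-1}+D_2^{-1}\end{smallmatrix}\right]=4\left[\begin{smallmatrix}D & 0\\ 0 & D\end{smallmatrix}\right]$. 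Conjugating by $N$ gives $S=\tfrac14\bigl((L_0N)^{T}(L_0N)+(M_0N)^{T}(M_0N)\bigr)$, the required form with $L=L_0N$, $M=M_0N$.

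For the extreme points, the ``only if'' direction is now free: if $S$ is extreme, the representation just obtained writes $S=\tfrac12\bigl(\tfrac12 L^{T}L\bigr)+\tfrac12\bigl(\tfrac12 M^{T}M\bigr)$ with both summands in $K_n$, so extremality forces $S=\tfrac12 L^{T}L$. For the ``if'' direction, suppose $S=\tfrac12 L^{T}L$ and $S=\tfrac12(S_1+S_2)$ with $S_1,S_2\in K_n$. Conjugating by the symplectic matrix $L^{-1}$, which maps $K_n$ onto itself because $(L^{-1})^{T}(2S'-iJ)L^{-1}=2(L^{-1})^{T}S'L^{-1}-iJ$, reduces the situation to $T_1+T_2=I$ with $T_i=(L^{-1})^{T}S_iL^{-1}\in K_n$. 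Each $T_i$ is real symmetric with $T_i\ge 0$ (since $v^{T}(2T_i-iJ)v\ge 0$ while $v^{T}Jv=0$ for real $v$), hence $0\le T_i\le I$, and $\det T_i\ge 4^{-n}$ by \eqref{eq3.6}. If $\mu_1,\dots,\mu_{2n}\in[0,1]$ are the eigenvalues of $T_1$, those of $T_2=I-T_1$ are $1-\mu_k$, so $\prod_k \mu_k(1-\mu_k)\ge 16^{-n}$; but $\mu(1-\mu)\le\tfrac14$ on $[0,1]$ forces every $\mu_k=\tfrac12$, whence $T_1=T_2=\tfrac12 I$ and $S_1=S_2=S$. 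Thus $S$ is extreme.

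I expect no serious obstacle once Theorem~\ref{thm3.3} and Lemma~\ref{lm3.4} are in hand; the rest is routine linear algebra. The point to watch is the bookkeeping of the constants: the Williamson bound $d_i\ge\tfrac12$ is exactly the hypothesis $2D\ge I$ needed to invoke Lemma~\ref{lm3.4}, and in the extreme-point argument it is the combination of $\mu(1-\mu)\le\tfrac14$ with the determinant bound \eqref{eq3.6} applied simultaneously to $T_1$ and to $I-T_1$ that pins down all the eigenvalues.
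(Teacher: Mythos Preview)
Your argument is correct. The first part of the proof and the necessity direction for extreme points coincide with the paper's approach: both use Theorem~\ref{thm3.3} to reduce to a diagonal Williamson form, invoke Lemma~\ref{lm3.4} to split $2D$ as required, and read off the symplectic factors $L=L_0N$, $M=M_0N$ (the paper has a minor typo, omitting the $N$ in the definition of $M$).

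For the sufficiency of extremality of $\tfrac12 L^{T}L$, however, you take a genuinely different route. The paper applies the first part once more to each of $S_1,S_2$, obtaining $L^{T}L=\tfrac14\sum_{j=1}^{4}L_j^{T}L_j$; after conjugating by $L^{-1}$ this becomes $I=\tfrac14\sum M_j$ with each $M_j$ symplectic, symmetric and positive definite, and then the algebraic identity $M_jJM_j=J$ is used to deduce $I=\tfrac14\sum M_j^{-1}$ as well, forcing $\sum(M_j^{1/2}-M_j^{-1/2})^{2}=0$ and hence $M_j=I$. Your argument avoids this second decomposition entirely: conjugating only once to reach $T_1+T_2=I$ with $T_i\in K_n$, you combine the determinant bound \eqref{eq3.6} for both $T_1$ and $I-T_1$ with the scalar inequality $\mu(1-\mu)\le\tfrac14$ to pin down every eigenvalue at $\tfrac12$. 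This is shorter and uses nothing beyond Theorem~\ref{thm3.3}; the paper's approach, on the other hand, exhibits more explicitly the role of the symplectic constraint via the relation $\sum M_j=\sum M_j^{-1}$, which is of some independent interest.
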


\begin{proof}
Let $S \in K_n.$ By Theorem \ref{thm3.3} we express $S$ as

\begin{equation}
S = N^T  \left [\begin{array}{cc} D & 0 \\ 0 & D \end{array} \right ] N \label{eq3.8}
\end{equation}
where $N$ is symplectic and $D = \diag (d_1, d_2, \ldots, d_n),$ $d_1 \ge d_2 \ge \cdots \ge d_n \ge \frac{1}{2}.$ Thus $2D \ge I$ and by Lemma \ref{lm3.4} there exist diagonal matrices $D_1 > 0,$ $D_2 > 0$ such that
$$2 D = \frac{1}{2} (D_1 + D_2) = \frac{1}{2} (D_1^{-1} + D_2^{-1}). $$
We rewrite \eqref{eq3.8} as
$$S = \frac{1}{4} N^T \left ( \left [\begin{array}{cc} D_1 & 0 \\ 0 & D_1^{-1} \end{array} \right ] + \left [\begin{array}{cc} D_2 & 0 \\ 0 & D_2^{-1} \end{array} \right ]\right )N.$$
Putting
$$ L = \left [\begin{array}{cc} D_1^{1/2} & 0 \\ 0 & D_1^{-1/2} \end{array} \right ] N, \quad M = \left [\begin{array}{cc} D_2^{1/2} & 0 \\ 0 & D_2^{-1/2} \end{array} \right ]$$ 
we have
$$S = \frac{1}{4} (L^T L + M^T M).$$
Since $\left [\begin{array}{cc} D_i^{1/2} & 0 \\ 0 & D_i^{-1/2} \end{array} \right ],$ $i = 1,2$ are symplectic it follows that $L$ and $M$ are symplectic. This proves the only if part of the first half of the theorem.

Since
$$\left [\begin{array}{cc} I & 0 \\ 0 & I \end{array} \right ] - i  \left [\begin{array}{cc} 0 & -I \\ I & 0 \end{array} \right ] \ge 0$$
multiplication by $L^T$ on the left and $L$ on the right shows that $L^TL - iJ \ge 0$ for any symplectic $L.$ Hence $\frac{1}{2} L^TL \in K_n$ $\forall$ $L \in Sp (2n, \mathbb{R}).$ Since $K_n$ is convex, $\frac{1}{4} (L^TL + M^T M) \in K_n,$ completing the proof of the first part.

The first part also shows that for an element $S$ of $K_n$ to be extremal it is necessary that $S = \frac{1}{2} L^T L$ for some symplectic $L.$ To prove sufficiency, suppose there exist $L \in Sp (2n, \mathbb{R}),$ $S_1, S_2 \in K_n$ such that 
$$\frac{1}{2} L^T L = \frac{1}{2} (S_1 + S_2).$$
By the first part of the theorem there exist $L_j \in Sp (2n, \mathbb{R})$ such that
\begin{equation}
L^T L = \frac{1}{4} \sum_{j=1}^4 L_j^T L_j  \label{eq3.9}
\end{equation}
where $S_1 = \frac{1}{4} (L_1^T L_1 + L_2^T L_2),$ $S_2 = \frac{1}{4} (L_3^T L_3 + L_4^T L_4).$ Left multiplication by $(L^T)^{-1}$ and right multiplication by $L^{-1}$ on both sides of \eqref{eq3.9} yields
\begin{equation}
I = \frac{1}{4} \sum_{j=1}^4 M_j  \label{eq3.10}
\end{equation}
where
$$M_j = (L^T)^{-1} L_j^T L_j L^{-1}. $$
Each $M_j$ is symplectic and positive definite. Multiplying by $J$ on both sides of \eqref{eq3.10} we get
\begin{eqnarray*}
J &=& \frac{1}{4} \sum_{j=1}^4 M_j J \\
&=& \frac{1}{4} \sum_{j=1}^4 M_j J M_j M_j^{-1} \\
&=& \frac{1}{4} J \sum_{j=1}^4 M_j^{-1}.
\end{eqnarray*}
Thus 
$$I = \frac{1}{4} \sum\limits_{j=1}^4 M_j = \frac{1}{4} \sum\limits_{j=1}^4 M_j^{-1} = \frac{1}{4} \sum\limits_{j=1}^4 \frac{1}{2} (M_j + M_j^{-1}),$$
which implies
$$\sum_{j=1}^4 \left (M_j^{1/2} - M_j^{-1/2} \right )^2 = 0, $$
or
$$M_j = I \quad \forall \quad 1 \leq j \leq 4$$
Thus
$$L_j^T L_j = L^T L \quad \forall \quad j$$
and $S_1 = S_2.$ This completes the proof of  sufficiency. \qed
\end{proof}

\begin{corollary}\label{cor3.6}
Let $S_1, S_2$ be extreme points of $K_n$ satisfying the inequality $S_1 \ge S_2.$ Then $S_1 = S_2.$
\end{corollary}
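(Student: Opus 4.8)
The plan is to combine the explicit description of extreme points in Theorem~\ref{thm3.5} with the determinant bound~\eqref{eq3.6}. First I would write $S_1 = \tfrac12 L_1^T L_1$ and $S_2 = \tfrac12 L_2^T L_2$ with $L_1, L_2 \in Sp(2n,\mathbb{R})$. For a symplectic matrix $L$ the relation $L^T J L = J$ forces $(\det L)^2 = 1$, so $\det(L_i^T L_i) = 1$ and hence $\det S_1 = \det S_2 = 2^{-2n} = 4^{-n}$; equivalently, both extreme points saturate the bound~\eqref{eq3.6}. Moreover, by the last line of the proof of Theorem~\ref{thm3.3}, every element of $K_n$ — in particular $S_2$ — is strictly positive definite, so $S_2^{-1/2}$ is well defined.

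Next I would pass to a congruence of $S_1$ by $S_2^{-1/2}$. Set $T = S_2^{-1/2} S_1 S_2^{-1/2}$, which is symmetric and, because of $S_1 \ge S_2$, satisfies $T \ge I$; also $\det T = \det S_1/\det S_2 = 1$. Since $T$ is symmetric with all eigenvalues $\ge 1$ whose product equals $1$, every eigenvalue must equal $1$, so $T = I$. Therefore $S_1 = S_2^{1/2} T S_2^{1/2} = S_2$, which is the assertion.

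I do not expect a genuine obstacle here. The only two points that need care are that elements of $K_n$ are invertible — already established inside the proof of Theorem~\ref{thm3.3} — and that the two extreme points share the same determinant, which is immediate from $S_i = \tfrac12 L_i^T L_i$ together with $(\det L_i)^2 = 1$; one could equally invoke~\eqref{eq3.6} directly, since extreme points are exactly the matrices in $K_n$ meeting the lower bound $\det S \ge 4^{-n}$ with equality. The rest is the elementary fact that a positive semidefinite matrix which dominates the identity and has determinant one is the identity.
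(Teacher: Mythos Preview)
Your argument is correct and follows essentially the same route as the paper: write $S_i=\tfrac12 L_i^T L_i$ via Theorem~\ref{thm3.5}, use $(\det L_i)^2=1$ to equate determinants, pass by a congruence to a positive definite matrix comparable to $I$ with determinant~$1$, and conclude that all eigenvalues equal~$1$. The only cosmetic difference is that the paper conjugates by $L_1^{-1}$ (forming the symplectic $M=L_2L_1^{-1}$ with $M^TM\le I$), whereas you conjugate by $S_2^{-1/2}$ to get $T\ge I$; the eigenvalue--determinant punchline is identical.
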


\begin{proof}
By Theorem \ref{thm3.5} there exist $L_i \in Sp (2n, \mathbb{R})$ such that $S_i = \frac{1}{2} L_i^T L_i,$ $i = 1,2.$ Note that $M = L_2 L_1^{-1}$ is symplectic and the fact that $S_1 \ge S_2$ can be expressed as $M^TM \leq I.$ Thus the eigenvalues of $M^TM$ lie in the interval $(0,1]$ but their product is equal to $(\det M)^2 = 1.$ This is possible only if all the eigenvalues are unity, i.e., $M^TM = I.$ This at once implies $L_1^T L_1 = L_2^T L_2.$ \qed
\end{proof}

Using the Williamson's normal form of the covariance matrix and the transformation properties of Gaussian states in Section \ref{sec:3} of \cite{krp} we shall now derive a formula for the density operator of a general Gaussian state. As in \cite{krp} denote by $\rho_g ({\bm {\ell}}, {\bm {m}}, S)$ the Gaussian state in $L^2 (\mathbb{R}^n)$ with the quantum Fourier transform
$$\rho_g ({\bm {\ell}}, {\bm {m}}, S)^{\wedge} ({\bm {z}}) = \exp - i\sqrt{2}({\bm {\ell}}^T {\bm {x}} - {\bm {m}}^T {\bm {y}}) - ({\bm {x}}^T {\bm {y}}^T) S \left (\begin{array}{c} {\bm {x}} \\ {\bm {y}} \end{array} \right ), {\bm {z}} = {\bm {x}} + i {\bm {y}}  $$
where ${\bm {\ell}},$ ${\bm {m}} \in \mathbb{R}^n$ and $S$ has the Williamson's normal form
$$S = M^T \left [\begin{array}{cc} D & 0 \\ 0 & D \end{array} \right ] M$$
with $M \in Sp(2n, \mathbb{R}),$ $D = \diag (d_1, d_2, \ldots, d_n),$ $d_1 \ge d_2 \ge \cdots \ge d_n \ge \frac{1}{2}.$ From Corollary 3.3 of \cite{krp} we have
$$W \left (\frac{{\bm {m}} + i {\bm {\ell}}}{\sqrt{2}} \right )^{\dagger} \rho_g ({\bm {\ell}}, {\bm {m}}, S)   W \left (\frac{{\bm {m}} + i {\bm {\ell}}}{\sqrt{2}} \right ) = \rho_g ({\bm {0}}, {\bm {0}}, S)$$
and Corollary 3.5 of \cite{krp} implies
$$\rho_g ({\bm {0}},{\bm {0}}, S ) = \Gamma (M)^{-1} \rho_g \left ({\bm {0}}, {\bm {0}}, \left [\begin{array}{cc} D & 0 \\ 0 & D \end{array} \right ] \right ) \Gamma (M). $$
Since $\left [\begin{array}{cc} D & 0 \\ 0 & D \end{array} \right ]$ is a diagonal covariance matrix
$$\rho_g \left ({\bm {0}}, {\bm {0}}, \left [\begin{array}{cc} D & 0 \\ 0 & D \end{array} \right ] \right ) = \bigotimes_{j=1}^n \rho_g (0.0, d_j I_2) $$
where the $j$-th component in the right hand side is the Gaussian state in $L^2 (\mathbb{R})$ with means $0$ and covariance matrix $d_j I_2, I_2$ denoting the identity matrix of order $2.$ If $d_j = \frac{1}{2}$ we have
$$\rho_g (0,0, \frac{1}{2} I_2) = | e (0)\rangle \langle e(0)| \,\,\,{\rm in}\,\, L^2(\mathbb{R}).$$

If $d_j > 1/2,$ writing $d_j = \frac{1}{2} \coth \frac{1}{2} s_j,$ one has
\begin{eqnarray*}
\rho_g (0,0, d_j I_2) &=& (1 - e^{-s_{j}}) e^{-s_{j} a^{\dagger} a} \\
&=& 2 \sinh \frac{1}{2} s_j \,\,e^{-\frac{1}{2} s_{j} (p^{2} + q^{2})} \,\, {\rm in} \,\, L^2 (\mathbb{R})
\end{eqnarray*}
with $a, a^{\dagger}, p, q$ denoting the operator of annihilation, creation, momentum and position respectively in $L^2 (\mathbb{R}).$ We now identify $L^2 (\mathbb{R}^n)$ and $L^2 (\mathbb{R})^{\otimes^{n}}$ and combine the reductions done above to conclude the following:

\begin{theorem}\label{thm3.7}
Let $\rho_g ({\bm {\ell}}, {\bm {m}}, S)$ be the Gaussian state in $L^2 (\mathbb{R}^n)$ with mean momentum and position vectors ${\bm {\ell}}, {\bm {m}}$ respectively and covariance matrix $S$ with Williamson's normal form
$$S = M^T \left [\begin{array}{cc} D & 0 \\ 0 & D \end{array} \right ] M, \quad M \in Sp (2n, \mathbb{R}),$$
$D = \diag (d_1, d_2, \ldots, d_n),$ $d_1 \ge d_2 \ge \cdots \ge d_m > d_{m+1} = d_{m+2} = \cdots = d_n = \frac{1}{2},$ $d_j = \frac{1}{2} \coth \frac{1}{2} s_j,$ $1 \leq j \leq m,$ $s_j > 0.$ Then
\begin{eqnarray}
\rho_g ({\bm {\ell}, {\bm {m}}, S)}  &=& W(\frac{{\bm {m}} + i {\bm {\ell}}}{\sqrt{2}}) \Gamma(M)^{-1} \prod_{j=1}^m (1 - e^{-s_{j}}) \times \nonumber \\
&&  e^{- \sum_{j=1}^{m} s_{j} a_{j}^{\dagger} a_{j}  } \otimes (|e(0) \rangle \times \langle e(0) |)^{\otimes ^{n-m}} \Gamma(M) W(\frac{{\bm {m}} + i {\bm {\ell}}}{\sqrt{2}})^{-1} \label{eq3.11}
\end{eqnarray}
where $W(\cdot)$ denotes Weyl operator, $\Gamma(M)$ is the unitary operator implementing the Bogolioubov automorphism of CCR corresponding to the symplectic linear transformation $M$ and $| e (0) \rangle$ denotes the exponential vector corresponding to $0$ in any copy of $L^2 (\mathbb{R}).$
\end{theorem}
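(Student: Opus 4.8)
The plan is to obtain Theorem~\ref{thm3.7} by composing the three reductions already carried out in the paragraph preceding its statement with a single elementary one-mode computation; the statement is essentially a bookkeeping summary, so the work is in keeping the conventions straight. First I would strip off the first moments: Corollary~3.3 of \cite{krp} gives
$$W\!\left(\frac{{\bm m} + i{\bm \ell}}{\sqrt{2}}\right)^{\dagger} \rho_g({\bm \ell},{\bm m},S)\, W\!\left(\frac{{\bm m} + i{\bm \ell}}{\sqrt{2}}\right) = \rho_g({\bm 0},{\bm 0},S),$$
so it suffices to identify $\rho_g({\bm 0},{\bm 0},S)$. Next I would diagonalize the covariance matrix: substituting the Williamson normal form $S = M^{T}\left[\begin{array}{cc} D & 0 \\ 0 & D\end{array}\right]M$ into Corollary~3.5 of \cite{krp}, which says that $\Gamma(M)$ implements the Bogolioubov automorphism attached to $M$, gives
$$\rho_g({\bm 0},{\bm 0},S) = \Gamma(M)^{-1}\,\rho_g\!\left({\bm 0},{\bm 0},\left[\begin{array}{cc} D & 0 \\ 0 & D\end{array}\right]\right)\Gamma(M).$$

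Then I would factor the diagonal state. Under the identification $L^2(\mathbb{R}^n)\cong L^2(\mathbb{R})^{\otimes n}$ one has $W({\bm \alpha}) = \bigotimes_{j=1}^n W(\alpha_j)$ by \eqref{eq3.1}, while for a covariance of the block form $\left[\begin{array}{cc} D & 0 \\ 0 & D\end{array}\right]$ the characteristic function \eqref{eq3.3} factors as $\prod_{j=1}^n e^{-d_j(x_j^2 + y_j^2)}$; since the quantum Fourier transform determines the state through the inversion formula \eqref{eq2.17}, this forces $\rho_g\!\left({\bm 0},{\bm 0},\left[\begin{array}{cc} D & 0 \\ 0 & D\end{array}\right]\right) = \bigotimes_{j=1}^n \rho_g(0,0,d_j I_2)$. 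The one step that genuinely needs a computation is the single-mode identification of $\rho_g(0,0,d I_2)$. For $d = \tfrac12$ one reads off from \eqref{eq2.4} with ${\bm z} = {\bm 0}$ that $\langle e(0)\,|\,W(\alpha)\,|\,e(0)\rangle = e^{-\frac12|\alpha|^2}$, so $\rho_g(0,0,\tfrac12 I_2) = |e(0)\rangle\langle e(0)|$. For $d>\tfrac12$ I would set $d = \tfrac12\coth\tfrac{s}{2}$ with $s>0$ and evaluate $\Tr\!\left((1-e^{-s})\,e^{-s a^{\dagger}a}\,W(\alpha)\right)$ --- for instance by expanding in the number basis and using the Laguerre-polynomial generating function, or via the coherent-state resolution of the identity --- obtaining $\exp\{-(\bar{n}+\tfrac12)|\alpha|^2\}$ with $\bar{n} = (e^{s}-1)^{-1}$; since $\bar{n}+\tfrac12 = \tfrac12\coth\tfrac{s}{2} = d$, this matches the characteristic function $e^{-d|\alpha|^2}$ of $\rho_g(0,0,d I_2)$, so the thermal state $(1-e^{-s})e^{-s a^{\dagger}a}$ is exactly $\rho_g(0,0,d I_2)$. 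The identity $a^{\dagger}a = \tfrac12(p^2+q^2-1)$ then also yields the equivalent $(p,q)$-form $2\sinh\tfrac12 s\; e^{-\frac12 s(p^2+q^2)}$ of this factor that was quoted before the theorem.

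Finally I would reassemble. Grouping the $n-m$ indices $j$ with $d_j = \tfrac12$ into the vacuum tensor power $\left(|e(0)\rangle\langle e(0)|\right)^{\otimes(n-m)}$ and the $m$ indices with $d_j>\tfrac12$ into $\prod_{j=1}^m(1-e^{-s_j})\,e^{-\sum_{j=1}^{m} s_j a_j^{\dagger}a_j}$, and then conjugating first by $\Gamma(M)$ and afterwards by $W\!\left(\frac{{\bm m} + i{\bm \ell}}{\sqrt{2}}\right)$ as dictated by the two displayed equations above, yields exactly \eqref{eq3.11}. I expect the only genuinely nontrivial ingredient to be the one-mode trace computation of the thermal characteristic function; everything else is tracking conventions --- in particular keeping straight whether it is $\Gamma(M)$ or $\Gamma(M)^{-1}$, and $W$ or $W^{\dagger}$, that appears (which is pinned down by the precise statements of Corollaries~3.3 and~3.5 of \cite{krp}) and the ordering of the tensor factors.
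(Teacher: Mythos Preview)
Your proposal is correct and follows exactly the route the paper takes: the paper's proof is simply ``Immediate from the discussion preceding the statement of the theorem,'' and that discussion consists precisely of the three reductions you list (Weyl displacement via Corollary~3.3 of \cite{krp}, symplectic conjugation via Corollary~3.5 of \cite{krp}, tensor factorization of the diagonal-covariance state) together with the single-mode identifications of $\rho_g(0,0,\tfrac12 I_2)$ as the vacuum and of $\rho_g(0,0,d_jI_2)$ for $d_j>\tfrac12$ as the thermal state $(1-e^{-s_j})e^{-s_ja^\dagger a}$. The only difference is that you sketch a verification of the thermal-state characteristic function, whereas the paper simply quotes it.
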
 

\begin{proof}
Immediate from the discussion preceding the statement of the theorem.\qed
\end{proof}

\begin{corollary}\label{cor3.8}
The wave function of the most general pure Gaussian state in$L^2 (\mathbb{R}^n)$ is of the form
$$| \psi \rangle = W({\bm {\alpha}}) \Gamma(U) \,\,|e_{\lambda_{1}} \rangle |e_{\lambda_{2}} \rangle  \cdots  |e_{\lambda_{n}} \rangle  $$
where
$$e_{\lambda} (x) = (2 \pi)^{-1/4} \lambda^{-1/2} \exp -4^{-1} \lambda^{-2} x^2, \quad x \in \mathbb{R}, \lambda > 0, $$
${\bm {\alpha}} \in \mathbb{C}^n,$ $U$ is a unitary matrix of order $n,$ $W ({\bm {\alpha}})$ is the Weyl operator associated with ${\bm {\alpha}},$ $\Gamma (U)$ is the second quantization unitary operator associated with $U$ and $\lambda_j,$ $1 \leq j \leq n$ are positive scalars.
\end{corollary}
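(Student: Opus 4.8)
I would first strip off the first moments with a Weyl operator, reduce the assertion to a statement about the mean--zero pure Gaussian state $\Gamma(M)^{-1}|e({\bm 0})\rangle$ attached to a symplectic matrix $M$, then diagonalise the relevant covariance matrix by an orthogonal symplectic congruence, and finally invoke the uniqueness of a Gaussian state given its mean and covariance.

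Concretely, let $\rho=|\psi\rangle\langle\psi|$ be a pure Gaussian state with mean momentum and position vectors ${\bm\ell},{\bm m}$ and covariance matrix $S$, written in Williamson's form \eqref{eq3.5} with $d_1\ge\cdots\ge d_n\ge\frac12$. By Theorem \ref{thm3.7}, $\rho$ is a unitary conjugate of $\big(\prod_{j\le m}(1-e^{-s_j})\big)e^{-\sum_{j\le m}s_j a_j^{\dagger}a_j}\otimes(|e(0)\rangle\langle e(0)|)^{\otimes(n-m)}$; since $e^{-\sum s_j a_j^{\dagger}a_j}$ has full rank, this operator is a one--dimensional projection precisely when $m=0$, i.e.\ when every $d_j=\frac12$. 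As $\rho$ is pure this is so; hence $S=\frac12 M^{T}M$ with $M\in Sp(2n,\mathbb R)$, and \eqref{eq3.11} becomes $|\psi\rangle=\mu_0\,W({\bm\alpha})\,\Gamma(M)^{-1}|e({\bm 0})\rangle$ for a scalar $\mu_0$ of modulus one, with ${\bm\alpha}=({\bm m}+i{\bm\ell})/\sqrt2$ and $|e({\bm 0})\rangle$ the vacuum of $L^2(\mathbb R^n)\cong L^2(\mathbb R)^{\otimes n}$. It then remains to exhibit a unitary matrix $U$ of order $n$ and positive scalars $\lambda_1,\dots,\lambda_n$ with $\Gamma(M)^{-1}|e({\bm 0})\rangle=\mu_1\,\Gamma(U)\big(|e_{\lambda_1}\rangle\otimes\cdots\otimes|e_{\lambda_n}\rangle\big)$ for some $\mu_1$ of modulus one.

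The main step, which I expect to be the only delicate one, is the linear--algebra fact that the real symmetric positive--definite symplectic matrix $P:=M^{T}M=2S$ is simultaneously orthogonally and symplectically congruent to a ``reciprocal--pair'' diagonal matrix. Here $P^{T}=P$ together with $P^{T}JP=J$ gives $JPJ^{-1}=P^{-1}$, so ${\bm v}\mapsto J{\bm v}$ carries the $\kappa$--eigenspace of $P$ bijectively onto the $\kappa^{-1}$--eigenspace; using moreover that $J$ restricts to an orthogonal complex structure on the $1$--eigenspace (to handle multiplicities there), one chooses an orthonormal eigenbasis of $\mathbb R^{2n}$ of the form $\{{\bm f}_1,\dots,{\bm f}_n,J{\bm f}_1,\dots,J{\bm f}_n\}$ with $P{\bm f}_j=\kappa_j{\bm f}_j$ and $\kappa_j>0$. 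The matrix $O$ with these columns is then real orthogonal, is symplectic since its columns form a Darboux basis (so $O^{T}JO=J$), and satisfies $O^{T}PO=\diag(\kappa_1,\dots,\kappa_n)\oplus\diag(\kappa_1^{-1},\dots,\kappa_n^{-1})$. Being orthogonal and symplectic, $O=L_V$ for a unique unitary matrix $V$ of order $n$, by the correspondence $U\mapsto L_U$ recalled in Section \ref{sec:2}. (This is one half of the Euler / Bloch--Messiah decomposition of $Sp(2n,\mathbb R)$ and could instead be quoted from \cite{dms}; the mildly fiddly point is that the eigenbasis can be taken Darboux, especially on the $\kappa=1$ eigenspace.)

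Finally I would put $\lambda_j:=\kappa_j^{-1/2}$, $U:=V$, and $|\phi\rangle:=\Gamma(U)\big(|e_{\lambda_1}\rangle\otimes\cdots\otimes|e_{\lambda_n}\rangle\big)$. Each $|e_{\lambda_j}\rangle\langle e_{\lambda_j}|$ is a pure Gaussian state in $L^2(\mathbb R)$ with zero mean and covariance $\diag\big(\tfrac1{2\lambda_j^2},\tfrac{\lambda_j^2}{2}\big)=\diag\big(\tfrac{\kappa_j}{2},\tfrac{\kappa_j^{-1}}{2}\big)$, so $\bigotimes_j|e_{\lambda_j}\rangle\langle e_{\lambda_j}|$ is a pure Gaussian state with mean ${\bm 0}$ and covariance $\tfrac12\big(\diag(\kappa_j)\oplus\diag(\kappa_j^{-1})\big)$. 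As $\Gamma(U)=\Gamma(L_V)=\Gamma(O)$ with $O$ symplectic, conjugation by $\Gamma(U)$ preserves purity and Gaussianity, leaves the mean at ${\bm 0}$, and — by the transformation rule for covariances under such a conjugation, which follows from \eqref{eq2.9} — sends the covariance $T$ to $OTO^{T}$; hence $|\phi\rangle\langle\phi|$ has covariance $\tfrac12 O\big(\diag(\kappa_j)\oplus\diag(\kappa_j^{-1})\big)O^{T}=\tfrac12 P=S$. Thus $|\phi\rangle\langle\phi|$ and $\Gamma(M)^{-1}|e({\bm 0})\rangle\langle e({\bm 0})|\Gamma(M)$ are pure Gaussian states with the same mean ${\bm 0}$ and the same covariance $S$, so by the uniqueness part of Theorem \ref{thm3.1} they coincide, giving $\Gamma(M)^{-1}|e({\bm 0})\rangle=\mu_1|\phi\rangle$ with $|\mu_1|=1$. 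Together with the reduction above this yields $|\psi\rangle=\mu\,W({\bm\alpha})\,\Gamma(U)\big(|e_{\lambda_1}\rangle\otimes\cdots\otimes|e_{\lambda_n}\rangle\big)$ for a scalar $\mu$ of modulus one, which is the asserted form, a pure state's wave function being determined only up to such a phase in any case. For the converse, any vector of the displayed form is the image of the pure Gaussian state $\bigotimes_j|e_{\lambda_j}\rangle\langle e_{\lambda_j}|$ under conjugation by the unitary $W({\bm\alpha})\Gamma(U)$ — a phase--space translation followed by a Bogoliubov (symplectic) transformation — and both operations preserve Gaussianity and purity, so every such vector is itself a pure Gaussian state.
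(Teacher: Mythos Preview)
Your proof is correct and follows essentially the same route as the paper's: both reduce to $m=0$ via Theorem~\ref{thm3.7}, obtain $|\psi\rangle=W({\bm\alpha})\Gamma(M)^{-1}|e({\bm 0})\rangle$ with covariance $\tfrac12 M^{T}M$, diagonalise $M^{T}M$ by an orthogonal symplectic congruence, and then invoke the fact that a Gaussian state is determined by its mean and covariance to replace $\Gamma(M)^{-1}|e({\bm 0})\rangle$ by $\Gamma(U)\bigotimes_j|e_{\lambda_j}\rangle$. The only difference is in how the diagonalisation is obtained: the paper quotes the Euler (Bloch--Messiah) factorisation $M=V_1\bigl[\begin{smallmatrix}D&0\\0&D^{-1}\end{smallmatrix}\bigr]V_2$ from \cite{dms} and reads off $M^{T}M=V_2^{T}\bigl[\begin{smallmatrix}D^2&0\\0&D^{-2}\end{smallmatrix}\bigr]V_2$, whereas you give a direct spectral argument for the positive symmetric symplectic matrix $P=M^{T}M$ using $JPJ^{-1}=P^{-1}$; as you yourself note, this is exactly one half of the cited decomposition, so the two arguments are equivalent, yours being slightly more self-contained.
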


\begin{proof}
Since the number operator $a^{\dagger} a$ has spectrum $\{0,1,2, \ldots \}$ it follows from Theorem \ref{thm3.7} that $\rho_g ({\bm {\ell}}, {\bm {m}}, S)$ is pure if and only if $m=0$ in \eqref{eq3.11}. This implies that the corresponding wave function $| \psi \rangle$ can be expressed as
\begin{equation}
| \psi \rangle = W ({\bm {\alpha}}) \Gamma (M)^{-1} (| e (0) \rangle)^{\otimes^{n}}  \label{eq3.12}
\end{equation}
where $M \in Sp (2n, \mathbb{R})$ and ${\bm {\alpha}} = \frac{{\bm {m}} + i {\bm {\ell}}}{\sqrt{2}}.$ The covariance matrix of this pure Gaussian state is $\frac{1}{2} M^T M.$ The symplectic matrix $M$ has the decomposition \cite{dms}
$$M = V_1  \left [\begin{array}{cc} D & 0 \\ 0 & D^{-1} \end{array} \right ] V_2$$
where $V_1$ and $V_2$ are real orthogonal as well as symplectic and $D$ is a positive diagonal matrix of order $n.$ Thus
\begin{eqnarray*}
M^T M &=& V_2^T \left [\begin{array}{cc} D^2 & 0 \\ 0 & D^{-2} \end{array} \right ] V_2 \\
&=& N^T N
\end{eqnarray*}
where
$$N = \left [\begin{array}{cc} D & 0 \\ 0 & D^{-1} \end{array} \right ] V_2.$$
Since the covariance matrix of $| \psi \rangle$ in \eqref{eq3.12} can also be written as $\frac{1}{2}N^T N,$ modulo a scalar multiple of modulus unity $| \psi \rangle$ can also be expressed as
\begin{equation}
 | \psi \rangle = W ({\bm {\alpha}}) \Gamma (V_2)^{-1} \Gamma \left ( \left [\begin{array}{cc} D^{-1} & 0 \\ 0 & D \end{array} \right ] \right ) \,\,|e(0) \rangle^{\otimes^{n}}.\label{eq3.13}
\end{equation}
If $U$ is the complex unitary matrix of order $n$ satisfying
\begin{eqnarray*}
U ({\bm {x}} + i {\bm {y}}) &=& {\bm {x}}^{\prime} + i {\bm {y}}^{\prime}, \\
\left [\begin{array}{c} {\bm {x}}^{\prime} \\ {\bm {y}}^{\prime} \end{array} \right ] &=& V_2^T  \left [\begin{array}{c} {\bm {x}} \\ {\bm {y}} \end{array} \right ] \quad \forall \quad  {\bm {x}}, {\bm {y}} \in \mathbb{R}^n
\end{eqnarray*}
and $D^{-1} = \diag (\lambda_1, \lambda_2, \ldots, \lambda_n)$ we can express \eqref{eq3.13} as
\begin{eqnarray*}
 | \psi \rangle &=& W ({\bm {\alpha}}) \Gamma (U) \left \{ \bigotimes_{j=1}^{n} \quad \Gamma \left ( \left [ \begin{array}{cc} \lambda_j & 0 \\ 0 & \lambda_j^{-1} \end{array} \right ] \right ) \,\,| e (0) \rangle \right \} \\
&=& W ({\bm {\alpha}}) \Gamma (U) \,|e_{\lambda_{1}} \rangle |e_{\lambda_{2}} \rangle \cdots |e_{\lambda_{n}} \rangle
\end{eqnarray*}
where we have identified $L^2 (\mathbb{R}^n)$ with $L^2 (\mathbb{R})^{\otimes^{n}}.$

We conclude this section with a result on the purification of Gaussian states.\qed
\end{proof}

\begin{theorem}\label{thm3.9}
Let $\rho$ be a mixed Gaussian state in $L^2 (\mathbb{R}^n).$ Then there  exists a pure Gaussian state $| \psi \rangle$ in $L^2 (\mathbb{R}^n) \otimes L^2 (\mathbb{R}^n)$ such that 
$$\rho = \Tr_2 \,\, U \,\,| \psi  \rangle \langle \psi | \,U^{\dagger} $$
for some unitary operator $U$ in  $L^2 (\mathbb{R}^n) \otimes L^2 (\mathbb{R}^n)$ with $\Tr_2$ denoting the relative trace over the second copy of $L^2 (\mathbb{R}^n).$
\end{theorem}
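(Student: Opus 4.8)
The plan is to reduce the statement, via Theorem~\ref{thm3.7}, to purifying the elementary building blocks that appear in the Williamson decomposition of $\rho$: single-mode Gibbs states and the single-mode vacuum. Write $\rho=\rho_g({\bm \ell},{\bm m},S)$ in the form \eqref{eq3.11}, that is $\rho=V\rho_0V^{\dagger}$, where $V=W\bigl(\tfrac{{\bm m}+i{\bm \ell}}{\sqrt2}\bigr)\Gamma(M)^{-1}$ is a (Gaussian) unitary on $L^2(\mathbb{R}^n)$ and, under the identification $L^2(\mathbb{R}^n)\cong L^2(\mathbb{R})^{\otimes n}$,
$$\rho_0=\bigotimes_{j=1}^{m}\bigl[(1-e^{-s_j})e^{-s_j a_j^{\dagger}a_j}\bigr]\otimes\bigl(|e(0)\rangle\langle e(0)|\bigr)^{\otimes(n-m)}$$
is a tensor product of single-mode Gibbs states and vacua. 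For any unitary $V$ on $L^2(\mathbb{R}^n)$ and any unitary $U_2$ on the ancilla copy one has $\Tr_2\bigl[(V\otimes U_2)|\Psi\rangle\langle\Psi|(V\otimes U_2)^{\dagger}\bigr]=V\bigl(\Tr_2|\Psi\rangle\langle\Psi|\bigr)V^{\dagger}$, so it suffices to exhibit a pure Gaussian purification of $\rho_0$ in $L^2(\mathbb{R}^n)\otimes L^2(\mathbb{R}^n)$ and then conjugate by $U=V\otimes I$.

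Next I would purify $\rho_0$ mode by mode. Reorganize $L^2(\mathbb{R}^n)\otimes L^2(\mathbb{R}^n)\cong\bigotimes_{j=1}^{n}\bigl(L^2(\mathbb{R})\otimes L^2(\mathbb{R})\bigr)$ by pairing the $j$-th system mode with the $j$-th ancilla mode; this reshuffling is a unitary permutation of tensor factors, it maps Gaussian states to Gaussian states, and it commutes with the relevant partial trace. For the $n-m$ vacuum factors, $|e(0)\rangle$ is already pure and $|e(0)\rangle\otimes|e(0)\rangle$ is its own (Gaussian) purification. For each Gibbs factor $(1-e^{-s_j})e^{-s_j a^{\dagger}a}$ on $L^2(\mathbb{R})$, whose spectral resolution in the number basis is $(1-e^{-s_j})\sum_{k\ge0}e^{-s_j k}|k\rangle\langle k|$, the canonical purification is the two-mode vector
$$|\psi_j\rangle=\sqrt{1-e^{-s_j}}\;\sum_{k\ge0}e^{-s_j k/2}\,|k\rangle\otimes|k\rangle\in L^2(\mathbb{R})\otimes L^2(\mathbb{R}),$$
and $\Tr_2|\psi_j\rangle\langle\psi_j|=(1-e^{-s_j})e^{-s_j a^{\dagger}a}$ by inspection. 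The one nontrivial point is that $|\psi_j\rangle$ is a pure Gaussian state: it is the two-mode squeezed vacuum, obtained from $|e(0)\rangle\otimes|e(0)\rangle$ by the Bogolioubov unitary $\Gamma(L_j')$ for a suitable $L_j'\in Sp(4,\mathbb{R})$ (explicitly, conjugate a pair of one-mode squeezings by the orthogonal symplectic beam-splitter $(a,b)\mapsto 2^{-1/2}(a+b,a-b)$, the squeezing parameter being fixed by $\tanh\log\lambda=e^{-s_j/2}$). Equivalently one can verify Gaussianity directly by computing the characteristic function $(\alpha_1,\alpha_2)\mapsto\langle\psi_j|W(\alpha_1)\otimes W(\alpha_2)|\psi_j\rangle$ and checking it has the Gaussian form of Theorem~\ref{thm3.1}. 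Tensoring the $|\psi_j\rangle$ for $j\le m$ with the vacuum pairs for $j>m$ produces a pure Gaussian vector $|\psi_0\rangle$ with $\Tr_2|\psi_0\rangle\langle\psi_0|=\rho_0$.

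Finally, put $|\psi\rangle=|\psi_0\rangle$ and $U=V\otimes I$; then $\Tr_2\,U|\psi\rangle\langle\psi|U^{\dagger}=V\rho_0V^{\dagger}=\rho$, which is the assertion. (Since $(V\otimes I)|\psi_0\rangle$ is again a pure Gaussian vector, one may alternatively absorb $V$ into the state and take $U=I$; the statement with $U$ is just the more flexible formulation.) A parallel route would work entirely at the level of covariance matrices: by Theorem~\ref{thm3.5} a pure Gaussian state has covariance matrix $\tfrac12L^{T}L$ with $L\in Sp(2n,\mathbb{R})$, and one writes down an explicit symmetric matrix of order $4n$ with this property whose upper-left $2n\times2n$ block is $S$ — essentially the same computation in linear-algebraic disguise.

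I expect the only real work to be the verification that the two-mode squeezed vacuum $|\psi_j\rangle$ is Gaussian in the precise sense of this paper — i.e.\ producing the symplectic matrix $L_j'$ with $\Gamma(L_j')\bigl(|e(0)\rangle\otimes|e(0)\rangle\bigr)\propto|\psi_j\rangle$, or the characteristic-function check — together with the routine but slightly fussy bookkeeping of the tensor-factor permutation identifying $L^2(\mathbb{R}^n)\otimes L^2(\mathbb{R}^n)$ with $\bigotimes_{j}\bigl(L^2(\mathbb{R})\otimes L^2(\mathbb{R})\bigr)$; everything else is immediate from Theorems~\ref{thm3.5} and~\ref{thm3.7}.
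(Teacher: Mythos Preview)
Your proposal is correct but follows a genuinely different route from the paper. The paper does not use the Williamson decomposition of Theorem~\ref{thm3.7} at all here; instead it invokes Theorem~\ref{thm3.5} to write $S=\tfrac14(L_1^TL_1+L_2^TL_2)$ with $L_1,L_2\in Sp(2n,\mathbb{R})$, takes the two pure Gaussian vectors $|\psi_{L_i}\rangle=\Gamma(L_i)^{-1}|e({\bm 0})\rangle$ in the two copies of $L^2(\mathbb{R}^n)$, and then applies the beam-splitter second quantization $\Gamma_0$ defined by $\Gamma_0\,e({\bm u}\oplus{\bm v})=e\bigl(\tfrac{{\bm u}+{\bm v}}{\sqrt2}\oplus\tfrac{{\bm u}-{\bm v}}{\sqrt2}\bigr)$. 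A result quoted from \cite{krp} (Proposition~3.11 there) gives directly $\Tr_2\,\Gamma_0\bigl(|\psi_{L_1}\rangle\langle\psi_{L_1}|\otimes|\psi_{L_2}\rangle\langle\psi_{L_2}|\bigr)\Gamma_0^{\dagger}=\rho_g({\bm 0},{\bm 0},S)$, after which a Weyl translation fixes the means.

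What the two approaches buy: the paper's argument ties the purification tightly to the convex-geometric decomposition of $K_n$ just proved, and Gaussianity of the purifying vector is automatic since it is a product $|\psi_{L_1}\rangle\otimes|\psi_{L_2}\rangle$ of vacua deformed by $\Gamma(L_i)^{-1}$; the price is reliance on an external proposition for the partial-trace identity. Your mode-by-mode thermofield-double construction is self-contained and is the standard quantum-information picture, but it shifts the work to verifying that the two-mode squeezed vacuum $\sum_k e^{-s_jk/2}|k\rangle\otimes|k\rangle$ is Gaussian---exactly the point you flagged. That verification is routine (your sketch via the orthogonal beam-splitter composed with one-mode squeezings is fine), so both proofs are of comparable length once everything is written out.
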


\begin{proof}
First we remark that by a Gaussian state in  $L^2 (\mathbb{R}^n) \otimes L^2 (\mathbb{R}^n)$ we mean it by the canonical identification of this product Hilbert space with $L^2 (\mathbb{R}^{2n}).$ Let $\rho = \rho_g ({\bm {\ell}}, {\bm {m}}, S) $ where by Theorem \ref{thm3.5} we can express
$$ S = \frac{1}{4}  \left (L_1^T L_1 + L_2^T L_2 \right ), \quad L_1, L_2 \in Sp(2n, \mathbb{R}). $$ 
Now consider the pure Gaussian states,
$$| \psi_{L_{i}} \rangle = \Gamma (L_i)^{-1} \,\,| e ({\bm {0}}) \rangle, \quad i = 1,2$$
in $L^2 (\mathbb{R}^n)$ and the second quantization unitary operator $\Gamma_0$ satisfying
$$ \Gamma_0 \,\,e ({\bm {u}} \oplus {\bm {v}}) = e \left (  \frac{{\bm {u}} + {\bm {v}} }{\sqrt{2}} \oplus \frac{{\bm {u}} -{\bm {v}}}{\sqrt{2}} \right ) \quad \forall \,\,{\bm {u}}, {\bm {v}} \in \mathbb{C}^n  $$
in $L^2 (\mathbb{R}^{2n})$ identified with $L^2 (\mathbb{R}^n) \otimes L^2 (\mathbb{R}^n),$ so that
$$e ({\bm {u}} \oplus {\bm {v}}  ) = e ({\bm {u}}) \otimes e ({\bm {v}} ).$$
Then by Proposition 3.11 of \cite{krp} we have
$$ \Tr_2 \,\,\Gamma_0 \left (    |\psi_{L_{1}} \rangle \langle \psi_{L_{1}}| \otimes  |\psi_{L_{2}} \rangle \langle \psi_{L_{2}}| \right ) \Gamma_0^{\dagger} = \rho_g ({\bm {0}}, {\bm {0}}, S).  $$
If ${\bm {\alpha}} = \frac{{\bm {m}} + i {\bm {\ell}}  }{\sqrt{2}}$ we have
$$W ({\bm {\alpha}}) \rho_g ({\bm {0}}, {\bm {0}}, S) W ({\bm {\alpha}})^{\dagger} =  \rho_g ({\bm {\ell}}, {\bm {m}}, S).$$
Putting 
$$U = \left ( W ({\bm {\alpha}}) \otimes I \right ) \Gamma_0 \left (\Gamma (L_1)^{-1} \otimes \Gamma (L_2)^{-1}  \right )$$
we get 
$$\rho_g \left ( {\bm {\ell}}, {\bm {m}},  S \right ) = \Tr_2 \,\, U \,\, | e ({\bm {0}}) \otimes e ({\bm {0}}) \rangle \langle   e ({\bm {0}}) \otimes e ({\bm {0}})| \, \, U^{\dagger}  $$
where $| e ({\bm {0}}) \rangle$ is the exponential vector in $L^2 (\mathbb{R}^n).$\qed
\end{proof}

\section{The symmetry group of the set of Gaussian states}
\label{sec:4}
\setcounter{equation}{0}

Let $S_n$ denote  the set of all Gaussian states in $L^2 (\mathbb{R}).$ We say that a unitary operator $U$ in $L^2 (\mathbb{R}^n)$ is a {\it Gaussian symmetry} if, for any $\rho \in S_n,$ the state $U \rho U^{\dagger}$ is also in $S_n.$ All such Gaussian symmetries constitute a group $\mathcal{G}_n.$ If ${\bm {\alpha}} \in \mathbb{C}^n$ and $L \in Sp (2n, \mathbb{R})$ then the associated Weyl operator $W ({\bm {\alpha}})$ and the unitary operator $\Gamma(L)$ implementing the Bogolioubov automorphism of CCR corresponding to $L$ are in $\mathcal{G}_n$ (See Corollary 3.5 in \cite{krp}.) The aim of this section is to show that any element $U$ in $\mathcal{G}_n$ is of the form $\lambda W({\bm {\alpha}}) \Gamma(L)$ where $\lambda$ is a complex scalar of modulus unity, ${\bm {\alpha}} \in \mathbb{C}^n$ and $L \in Sp(2n, \mathbb{R}).$ This settles a question raised in \cite{krp}.

We begin with a result on a special Gaussian state. 

\begin{theorem}\label{thm4.1}
Let $s_1 > s_2 > \cdots > s_n > 0$ be irrational numbers which are linearly independent over the field  $Q$ of rationals and let
$$\rho_{{\bm {s}}} = \rho_g ({\bm {0}}, {\bm {0}}, S) = \prod_{j=1}^n (1-e^{-s_{j}}) e^{- \sum\limits_{j=1}^{n} s_{j} a_{j}^{\dagger} a_{j}} $$
be the Gaussian state in $L^2(\mathbb{R}^n)$ with zero position and momentum mean vectors and covariance matrix
$$S =  \left [\begin{array}{cc} D & 0 \\ 0 & D \end{array} \right ], \quad D = \diag (d_1, d_2, \ldots, d_n)$$
with $d_j = \frac{1}{2} \coth \frac{1}{2} s_j.$ Then a unitary operator $U$ in $L^2 (\mathbb{R}^n)$ has the property that  $U \rho_{{\bm {s}}} U^{\dagger}$ is a Gaussian state if and only if, for some ${\bm { \alpha}} \in \mathbb{C}^n,$ $L \in Sp (2n, \mathbb{R})$ and a complex-valued function $\beta$ of modulus unity on $\mathbb{Z}_{+}^n$ 

\begin{equation}
U = W ({\bm {\alpha}}) \Gamma(L) \beta (a_1^{\dagger} a_1, a_2^{\dagger} a_2, \ldots, a_n^{\dagger} a_n)  \label{eq4.1}
\end{equation}
where $\mathbb{Z}_{+} = \{ 0,1,2, \ldots \}.$
\end{theorem}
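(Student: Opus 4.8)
The plan is to treat the two implications separately, the forward one being routine and the converse carrying all the weight.

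For the \emph{if} direction I would simply note that $\beta(a_1^{\dagger}a_1,\ldots,a_n^{\dagger}a_n)$ is a bounded function of the number operators, hence commutes with $\rho_{\bm s}$, so that $\beta(\cdot)\,\rho_{\bm s}\,\beta(\cdot)^{\dagger}=\rho_{\bm s}$; then $\Gamma(L)\,\rho_{\bm s}\,\Gamma(L)^{\dagger}$ is again Gaussian by Corollary 3.5 of \cite{krp}, and conjugation by $W({\bm \alpha})$ only translates the mean momentum--position vectors (Corollary 3.3 of \cite{krp}), so $U\rho_{\bm s}U^{\dagger}$ is Gaussian.

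For the \emph{only if} direction, suppose $\rho':=U\rho_{\bm s}U^{\dagger}$ is Gaussian, say $\rho'=\rho_g({\bm \ell}',{\bm m}',S')$. First I would exploit that $U$ is unitary: $\rho'$ has exactly the eigenvalues of $\rho_{\bm s}$, with multiplicities, namely the numbers $c\,e^{-\sum_j s_j k_j}$ with $c=\prod_j(1-e^{-s_j})$ and ${\bm k}\in\mathbb{Z}_{+}^{n}$, which by the $\mathbb{Q}$-linear independence of $s_1,\ldots,s_n$ are pairwise distinct; moreover for $\rho_{\bm s}$ itself the eigenvectors are precisely the number states $|{\bm k}\rangle$, which I will use at the end. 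Next I would feed $\rho'$ through Theorem \ref{thm3.7}: with the Williamson symplectic eigenvalues $d_1'\ge\cdots\ge d_n'$ of $S'$ and $d_j'=\frac{1}{2}\coth\frac{1}{2}s_j'$ for those exceeding $\frac{1}{2}$, this writes $\rho'=W({\bm \alpha}')\,\Gamma(M)^{-1}\,\tau\,\Gamma(M)\,W({\bm \alpha}')^{\dagger}$ with ${\bm \alpha}'=({\bm m}'+i{\bm \ell}')/\sqrt{2}$, $M\in Sp(2n,\mathbb{R})$, and $\tau$ a ``diagonal'' Gaussian state whose spectrum involves only the $s_j'$ with $d_j'>\frac{1}{2}$. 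Comparing this spectrum with that of $\rho_{\bm s}$ should force that none of the $d_j'$ equals $\frac{1}{2}$ and that $\{s_1',\ldots,s_n'\}=\{s_1,\ldots,s_n\}$, hence (listing both decreasingly) $s_j'=s_j$ for all $j$ and $\tau=\rho_{\bm s}$. Thus $\rho'=W({\bm \alpha}')\,\Gamma(M)^{-1}\,\rho_{\bm s}\,\Gamma(M)\,W({\bm \alpha}')^{\dagger}$. Finally, putting $V=\Gamma(M)\,W({\bm \alpha}')^{\dagger}\,U$ gives $V\rho_{\bm s}V^{\dagger}=\rho_{\bm s}$, so $V$ commutes with $\rho_{\bm s}$; since $\rho_{\bm s}$ has non-degenerate spectrum with the number states as eigenvectors, $V$ must be diagonal in that basis, $V|{\bm k}\rangle=\beta({\bm k})|{\bm k}\rangle$ with $|\beta({\bm k})|=1$, i.e. $V=\beta(a_1^{\dagger}a_1,\ldots,a_n^{\dagger}a_n)$. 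This yields $U=W({\bm \alpha}')\,\Gamma(M)^{-1}\,\beta(a_1^{\dagger}a_1,\ldots,a_n^{\dagger}a_n)$, which is \eqref{eq4.1} with ${\bm \alpha}={\bm \alpha}'$, $L=M^{-1}\in Sp(2n,\mathbb{R})$, the phase in $\Gamma(M)^{-1}$ being absorbed into $\beta$.

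The hard part will be the spectral step: proving that the multiset $\{c\prod_j e^{-s_jk_j}:{\bm k}\in\mathbb{Z}_{+}^{n}\}$ determines the unordered set $\{s_1,\ldots,s_n\}$, and in particular that no Williamson eigenvalue of $S'$ can collapse to $\frac{1}{2}$ (which would make the spectrum of $\rho_{\bm s}$ that of a Gaussian state on fewer than $n$ active modes). The natural route is to observe that, by $\mathbb{Q}$-linear independence, the additive sub-semigroup $\sum_j\mathbb{Z}_{+}s_j$ of $\mathbb{R}$ is freely generated, so its indecomposable elements are exactly $s_1,\ldots,s_n$ and are recovered from the spectrum, while no generating set of size $<n$ can produce that semigroup. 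This is precisely where the irrationality and $\mathbb{Q}$-independence hypotheses are used; everything else is bookkeeping with Theorem \ref{thm3.7} and elementary operator theory.
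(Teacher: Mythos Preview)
Your proposal is correct and follows essentially the same route as the paper: apply Theorem~\ref{thm3.7} to $\rho'$, compare spectra (using the $\mathbb{Q}$-linear independence of the $s_j$ to recover the generators of the additive semigroup), and conclude that the residual unitary commutes with $\rho_{\bm s}$ and is therefore diagonal in the number basis. The one spot to tighten is your assertion ``hence $\tau=\rho_{\bm s}$'': the spectral comparison only gives $\{s_j'\}=\{s_j\}$ as \emph{sets}, while Theorem~\ref{thm3.7} hands you $\tau$ with the $s_j'$ in a fixed order (increasing, since the $d_j'$ are decreasing and $d\mapsto s$ is antitone), so in general $\tau=\Gamma(A)\rho_{\bm s}\Gamma(A)^{-1}$ for a mode-permutation $A$; the paper keeps this permutation explicit and then absorbs $\Gamma(A)$ into $\Gamma(L)$, which is exactly the patch your argument needs.
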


\begin{proof}
Sufficiency is immediate from Corollary 3.3 and Corollary 3.5 of \cite{krp}. To prove necessity assume that
\begin{equation}
U \rho_{{\bm {s}}} U^{\dagger} = \rho_g ({\bm {\ell}}, {\bm {m}}, S^{\prime})  \label{eq4.2}
\end{equation}
Since $a^{\dagger} a$ in $L^2 (\mathbb{R})$ has spectrum $\mathbb{Z}_{+}$ and each eigenvalue $k$ has multiplicity one \cite{gcw} it follows that the selfadjoint positive operator $\sum\limits_{j=1}^n s_j a_j^{\dagger} a_j,$ being a sum of commuting self adjoint operators $s_j a_j^{\dagger} a_j,$ $1 \leq j \leq n$ has spectrum $\left \{\left .\sum\limits_{j=1}^n s_j k_j \right | k_j \in \mathbb{Z}_{+} \,\,\forall \,\, j \right \}$ with each eigenvalue of multiplicity one thanks to the assumption on $\{s_j, 1 \leq j \leq n \}.$ Since $\rho_{{\bm {s}}}$ and $U \rho_{{\bm {s}}} U^{-1}$ have the same set of eigenvalues and same multiplicities it follows from Theorem \ref{thm3.7} that 
\begin{equation}
U  \rho_{{\bm {s}}} U^{-1} = W ({\bm {z}}) \Gamma(M)^{-1} \rho_{{\bm {t}}} \Gamma(M) W ({\bm {z}})^{-1}  \label{eq4.3}
\end{equation}
where ${\bm {z}} \in \mathbb{C}^n,$ $M \in Sp (2n, \mathbb{R}),$ ${\bm {t}} = (t_1, t_2, \ldots, t_n)^T$ and 
$$\rho_{{\bm {t}}} = \prod_{j=1}^n (1-e^{-t_{j}}) e^{- \sum\limits_{j=1}^{n} t_{j} a_{j}^{\dagger} a_{j}}. $$
Since the maximum eigenvalues of $\rho_{{\bm {s}}}$ and $\rho_{{\bm {t}}}$ are same it follows that
$$\prod (1 - e^{-s_{j}}) = \prod (1-e^{-t_{j}}).$$
Since the spectra of $\rho_{{\bm {s}}}$ and $\rho_{{\bm {t}}}$  are same it follows that
$$\left \{\left .\sum_{j=1}^n s_j k_j \right | k_j \in \mathbb{Z}_{+} \quad \forall \,\, j \right \} = \left \{\left .\sum_{j=1}^n t_j k_j \right | k_j \in \mathbb{Z}_{+} \quad \forall \,\, j \right \}. $$
Choosing ${\bm {k}} = (0,0, \ldots, 0, 1, 0, \ldots, 0)^T$ with $1$ in the $k$-th position we conclude the existence of matrices $A,B$ of order $n \times n$ and entries in $\mathbb{Z}_{+}$ such that
$$ {\bm {t}} = A {\bm {s}}, \quad {\bm {s}} = B {\bm {t}}$$
so that  $BA {\bm {s}} = {\bm {s}}.$ The rationally linear independence of the $s_j$'s implies $BA=I.$ This is possible only if $A$ and $B = A^{-1}$ are both permutation matrices.

Putting $V = \Gamma(M) W ({\bm {z}})^{\dagger} U$ we have from \eqref{eq4.3}
$$V \rho_{{\bm {s}}} = \rho_{{\bm {t}}} V.$$
Denote by $| {\bm {k}} \rangle$ the vector satisfying
$$a_j^{\dagger} a_j \,\, | {\bm {k}} \rangle = k_j \,\,  | {\bm {k}} \rangle$$
where $| {\bm {k}} \rangle = | { {k}}_1 \rangle | { {k}}_2 \rangle \cdots | { {k}}_n \rangle.$ Then
\begin{eqnarray*}
V \rho_{{\bm {s}}} \,\, | {\bm {k}} \rangle &=& \prod_{j=1}^n (1 - e^{-s_{j}}) e^{- \sum s_{j} k_{j}} \,\,V \,\,| {\bm {k}} \rangle \\
&=& \rho_{{\bm {t}}} \, V | {\bm {k}} \rangle, \quad {\bm {k}} \in \mathbb{Z}_{+}^n.
\end{eqnarray*}
Thus $V \,| {\bm {k}} \rangle$ is an eigenvector for $\rho_{{\bm {t}}}$ corresponding to the eigenvalue
\begin{eqnarray*}
\prod (1 -e^{-s_{j}}) e^{- {\bm {s}}^{{\bm {t}}} {\bm {k}}} &=& \prod_{j=1}^n (1-e^{-t_{j}}) e^{-{\bm {t}}^{T} B^{T} {\bm {k}}} \\
&=& \prod_{j=1}^n (1-e^{-t_{j}}) e^{-{\bm {t}}^{T} A {\bm {k}}}.
\end{eqnarray*}
Hence there exists a scalar $\beta ({\bm {k}})$ of modulus unity such that
\begin{eqnarray*}
V \,\, | {\bm {k}} \rangle &=& \beta ({\bm {k}}) \,\, | A {\bm {k}} \rangle \\
&=& \Gamma(A) \beta (a_1^{\dagger} a_1, a_2^{\dagger} a_2, \ldots, a_n^{\dagger} a_n) \,\| {\bm {k}} \rangle \,\,\,\forall \,\,\, {\bm {k}} \in \mathbb{Z}_{+}^n.
\end{eqnarray*}
where $\Gamma(A)$ is the second quantization of the permutation unitary matrix $A$ acting in $\mathbb{C}^n.$ Thus
$$U = W ({\bm {z}}) \Gamma(M)^{\dagger} \Gamma(A) \beta (a_1^{\dagger} a_1, a_2^{\dagger} a_2, \ldots, a_n^{\dagger} a_n).$$
which completes the proof.\qed
\end{proof}

\begin{theorem}\label{thm4.2}
  A unitary operator $U$ in $L^2 (\mathbb{R}^n)$ is a Gaussian symmetry if and only if there exist a scalar $\lambda$ of modulus unity, a vector ${\bm {\alpha}}$ in $\mathbb{C}^n$ and a symplectic matrix $L \in Sp (2n, \mathbb{R})$ such that
$$U = \lambda W ({\bm {\alpha}}) \Gamma (L)$$
where $W ({\bm {\alpha}})$ is the Weyl operator associated with ${\bm {\alpha}}$ and $\Gamma(L)$ is a unitary operator implementing the Bogolioubov automorphism of CCR corresponding to $L.$
\end{theorem}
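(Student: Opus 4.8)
The plan is to leverage Theorem \ref{thm4.1} as the engine and then eliminate the extra degrees of freedom.

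\medskip

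\noindent\textbf{Step 1: Reduce to a special state.} A Gaussian symmetry $U$ must, in particular, carry the special Gaussian state $\rho_{\bm s}$ of Theorem \ref{thm4.1} (with $s_1 > s_2 > \cdots > s_n > 0$ irrational and rationally independent) to a Gaussian state. Hence by Theorem \ref{thm4.1},
$$U = W({\bm \alpha}) \Gamma(L)\,\beta(a_1^\dagger a_1, \ldots, a_n^\dagger a_n)$$
for some ${\bm \alpha} \in \mathbb{C}^n$, $L \in Sp(2n,\mathbb{R})$, and a unimodular function $\beta$ on $\mathbb{Z}_+^n$. Since $W({\bm \alpha})$ and $\Gamma(L)$ are themselves Gaussian symmetries, the operator $V := \beta(a_1^\dagger a_1,\ldots,a_n^\dagger a_n)$ is a Gaussian symmetry too; it suffices to prove that any such diagonal-in-number-basis Gaussian symmetry is a scalar of modulus unity.

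\medskip

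\noindent\textbf{Step 2: Test $V$ against more Gaussian states.} The function $\beta$ is pinned down by demanding that $V$ preserve Gaussianity of states \emph{other} than $\rho_{\bm s}$. The natural probes are coherent states $W({\bm \gamma})\,|e(0)\rangle\langle e(0)|\,W({\bm \gamma})^\dagger$ and, more generally, pure Gaussian states $|\psi\rangle\langle\psi|$ from Corollary \ref{cor3.8}. Since $V$ is diagonal in the number basis, $V|e(0)\rangle = \beta(0,\ldots,0)|e(0)\rangle$, so $V$ fixes the vacuum state; absorbing the constant $\beta(0,\ldots,0)$ into $\lambda$, we may assume $\beta(0,\ldots,0)=1$. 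Now apply $V$ to a coherent state along a single mode, say the vector $|e(z{\bm e}_1)\rangle = e^{-|z|^2/2}\sum_{k\ge 0} (z^k/\sqrt{k!})\,|k\rangle_1 \otimes |e(0)\rangle^{\otimes(n-1)}$. Then
$$V\,|e(z{\bm e}_1)\rangle = e^{-|z|^2/2}\sum_{k\ge 0} \beta(k,0,\ldots,0)\,\frac{z^k}{\sqrt{k!}}\,|k\rangle_1 \otimes |e(0)\rangle^{\otimes(n-1)}.$$
Requiring this to be (a scalar times) a Gaussian vector for every $z$ forces the sequence $\beta(k,0,\ldots,0)$ to be such that the generating function $\sum_k \beta(k,0,\ldots,0) z^k/k!$ differs from $e^z$ only by the Gaussian-vector degrees of freedom, namely a Weyl shift and a squeezing in the single mode $L^2(\mathbb{R})$.

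\medskip

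\noindent\textbf{Step 3: Rigidity of the number-diagonal form.} Here is the crux. A single-mode unitary that is diagonal in the number basis, $\beta(a^\dagger a)$, commutes with $a^\dagger a$, hence with the one-parameter rotation group $e^{i\theta a^\dagger a} = \Gamma(e^{i\theta})$. If $\beta(a^\dagger a)$ is also a Gaussian symmetry then, being a product of the general form $\lambda W({\bm \alpha})\Gamma(L)$ — which we are in the process of proving, so instead argue directly — conjugation by it must send the Weyl operators $W({\bm \gamma})$ to Weyl operators (up to phase), because $W({\bm \gamma})\rho W({\bm \gamma})^\dagger$ Gaussian for all Gaussian $\rho$ and the action on quantum Fourier transforms must be a translation composed with a symplectic map. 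Concretely: the Gaussian symmetry group acts on the quantum characteristic function $\hat\rho$ by the affine symplectic action; a symmetry commuting with all phase rotations $\Gamma(e^{i\theta}\oplus\cdots)$ must induce the identity affine-symplectic map (the only elements of the inhomogeneous symplectic group commuting with the full maximal torus are trivial), whence it fixes every Gaussian state, and therefore — since Gaussian states are dense enough, or directly since it fixes all coherent states, which are total — $V$ is a scalar. Thus $\beta$ is constant of modulus unity, absorbed into $\lambda$, and $U = \lambda W({\bm \alpha})\Gamma(L)$.

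\medskip

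\noindent The main obstacle is Step 3: showing that the unimodular number-function $\beta$ cannot secretly encode a nontrivial squeezing. One must rule out the possibility that $\beta(a^\dagger a)$ implements a Bogolioubov automorphism — but any Bogolioubov automorphism implemented by a number-diagonal unitary must commute with the number operator, hence preserve the subspaces of fixed photon number, hence act as a pure rotation $\Gamma(e^{i\theta})$; and a pure rotation moves coherent states to coherent states with rotated label, so for $V$ to fix the vacuum \emph{and} be number-diagonal \emph{and} Gaussian-symmetric forces $\theta$ and all higher phases to collapse to a global constant. The cleanest way to run this is to note $V\,|e(0)\rangle = |e(0)\rangle$ (after normalizing $\beta(0)=1$), apply $V$ to $|e({\bm \gamma})\rangle = W({\bm \gamma})e^{\|{\bm \gamma}\|^2/2}|e(0)\rangle$, use that the image is Gaussian with the same covariance $\tfrac12 I$ (since the eigenvalue-spectrum/purity is preserved and $m=0$ persists), hence is again a coherent state $|e({\bm \gamma}')\rangle$ up to phase, and finally check that ${\bm \gamma}\mapsto{\bm \gamma}'$ being both $\mathbb{C}$-analytic in each coordinate (from number-diagonality) and a real-symplectic affine map (from Gaussian-symmetry) forces ${\bm \gamma}'={\bm \gamma}$, so $V$ fixes a total set of vectors up to phase and is scalar.
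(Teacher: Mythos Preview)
Your Step 1 is exactly the paper's reduction, and your Step 2 identifies the right probe states. The real issue is Step 3, where you assert that $V|e({\bm\gamma})\rangle$ has covariance $\tfrac12 I$ ``since the eigenvalue-spectrum/purity is preserved and $m=0$ persists.'' This only shows the image is a \emph{pure} Gaussian state; but every pure Gaussian state has Williamson parameters $d_j=\tfrac12$ (i.e.\ $m=0$) while its covariance matrix is $\tfrac12 M^TM$ for an arbitrary symplectic $M$. Purity and the density-operator spectrum do not distinguish a coherent state from a squeezed coherent state, so you have not ruled out squeezing. Your other attempts in Step 3 (the map on quantum Fourier transforms being affine-symplectic; the action on Weyl operators; the map ${\bm\gamma}\mapsto{\bm\gamma}'$ being real-symplectic-affine) all presuppose that Gaussian symmetries act affinely on parameters, which is precisely the theorem being proved.

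The paper closes this gap with a growth argument. Writing $|\psi'\rangle = V|e({\bm u})\rangle$ (all $u_j\neq 0$) and evaluating $f({\bm z})=\langle\psi'|e({\bm z})\rangle$ in two ways: from the number-diagonal form one gets $|f({\bm z})|\le \exp\bigl(-\tfrac12\|{\bm u}\|^2+\sum_j|u_j||z_j|\bigr)$, i.e.\ at most linear-exponential growth; from the general pure-Gaussian form $|\psi'\rangle=W({\bm\alpha})\Gamma(A)|e_{\lambda_1}\rangle\cdots|e_{\lambda_n}\rangle$ of Corollary \ref{cor3.8} one computes $f({\bm z})$ explicitly as a nonzero constant times the exponential of a genuine degree-two polynomial in ${\bm z}$ whenever some $\lambda_j\neq 1$. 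The quadratic term forces quadratic-exponential growth along suitable rays, contradicting the bound; hence all $\lambda_j=1$ and $|\psi'\rangle$ is coherent. From there, matching number-basis coefficients gives $\beta({\bm k})=\beta({\bm 0})\prod_j e^{i\theta_j k_j}$, so $V=\beta({\bm 0})\,\Gamma(\mathrm{diag}(e^{i\theta_1},\dots,e^{i\theta_n}))$, which is of the required form (note: $V$ need not be a scalar, contrary to your final sentence, but $\Gamma(D)$ is already a $\Gamma(L)$).
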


\begin{proof}
The if part is already contained in Corollary 3.3 and Corollary 3.5 of \cite{krp}. In order to prove the only if part we may, in view of Theorem \ref{thm4.1}, assume that $U = \beta (a_1^{\dagger} a_1, a_2^{\dagger} a_2, \ldots, a_n^{\dagger} a_n)$ where $\beta$ is a function of modulus unity on $\mathbb{Z}_{+}^n.$ If such a $U$ is a  Gaussian symmetry then, for any pure Gaussian state $| \psi \rangle,$ $U | \psi \rangle$ is also a pure Gaussian state. We choose
$$|\psi \rangle = e^{-\frac{1}{2} \|{\bm {u}} \|^{2}} |e ({\bm {u}})\rangle = W ({\bm {u}}) | e({\bm {0}}) \rangle $$
where ${\bm {u}} = (u_1, u_2, \ldots, u_n)^T \in \mathbb{C}^n$ with $u_j \neq 0 \,\,\forall \,\,j.$ By our assumption
\begin{equation}
| \psi^{\prime} \rangle = e^{-\frac{1}{2} \|{\bm {u}} \|^{2}} \beta (a_1^{\dagger} a_1, a_2^{\dagger} a_2, \ldots, a_n^{\dagger} a_n) |e({\bm {u}}) \rangle \label{eq4.4}
\end{equation}
is also a pure Gaussian state. By Corollary \ref{cor3.8}, $\exists {\bm {\alpha}} \in \mathbb{C}^n,$ a unitary matrix $A$ of order $n$ and $\lambda_j >0,$ $1 \leq j \leq n$ such that 
 \begin{equation}
| \psi^{\prime} \rangle =  W ({\bm {\alpha}}) \Gamma(A) \,|e_{\lambda_{1}} \rangle  |e_{\lambda_{2}} \rangle \cdots  |e_{\lambda_{n}} \rangle. \label{eq4.5}
\end{equation}
Using \eqref{eq4.4}  and \eqref{eq4.5} we shall evaluate the function $f ({\bm {z}}) = \langle \psi^{\prime} | e({\bm {z}}) \rangle$  in two different ways. From \eqref{eq4.4} we have
\begin{eqnarray}
f ({\bm {z}}) &=& e^{-\frac{1}{2} \|{\bm {u}} \|^{2}} \langle e({\bm {u}}) \left | \bar{\beta} (a_1^{\dagger} a_1, a_2^{\dagger} a_2, \ldots, a_n^{\dagger} a_n)\right | e({\bm {z}}) \rangle \nonumber \\
&=&  e^{-\frac{1}{2} \|{\bm {u}} \|^{2}}  \sum_{{\bm{z}} \in \mathbb{Z}_{+}^{n}}  \frac{\bar{\beta} (k_1, k_2, \ldots, k_n)}{k_1! k_2! \ldots k_n!} (\bar{u}_1 z_1)^{k_{1}} \ldots (\bar{u}_n z_n)^{k_{n}} \,| k_1 k_2 \cdots k_n \rangle \label{eq4.6}
\end{eqnarray}
where $| k_1 k_2 \cdots k_n \rangle = | k_1 \rangle | k_2 \rangle \cdots| k_n \rangle$ and $| e(z) \rangle = \sum\limits_{k \in \mathbb{Z}_{+}} \frac{z^k}{\sqrt{k!}} |k \rangle $ for $z \in \mathbb{C}.$

Since $| \beta ({\bm {k}})| = 1,$ \eqref{eq4.6} implies
 \begin{equation}
|f ({\bm {z}})| \leq \exp \left \{-\frac{1}{2} \|{\bm {u}}\|^2 + \sum_{j=1}^n \,\, |u_j| \,\,|z_j| \right \}.  \label{eq4.7}
\end{equation}

From the definition of $|e_{\lambda}\rangle$ in Corollary \ref{cor3.8} and the exponential vector $|e(z) \rangle$ in $L^2 (\mathbb{R})$ one has
$$\langle e_{\lambda} | e(z) \rangle = \sqrt{\frac{2 \lambda}{1+\lambda^2}} \,\exp \,\frac{1}{2} \left (\frac{\lambda^2 -1}{\lambda^2+1} \right ) z^2, \quad \lambda > 0, \quad z \in \mathbb{C}. $$
This together with \eqref{eq4.5} implies
\begin{eqnarray*}
 f({\bm {z}}) &=& \langle \left . e_{\lambda_{1}} \otimes e_{\lambda_{2}} \otimes \cdots \otimes e_{\lambda_{n}} \,\, \right | \Gamma(A^{-1}) W (-{\bm {\alpha}}) e ({\bm {z}})  \rangle \\
&=& e^{\langle {\bm {\alpha}} | {\bm {z}}\rangle - \frac{1}{2} \| {\bm {\alpha}}\|^{2}} \langle \left . e_{\lambda_{1}} \otimes e_{\lambda_{2}} \otimes \cdots \otimes e_{\lambda_{n}} \,\, \right | e (A^{-1} ({\bm {z}} + {\bm {\alpha}}))  \rangle 
\end{eqnarray*}
which is a nonzero scalar multiple of the exponential of a polynomial of degree 2 in $z_1, z_2, \ldots, z_n$ except when all the $\lambda_j$'s are equal to unity. This would contradict the inequality \eqref{eq4.6} except when $\lambda_j = 1 \,\,\forall \,\, j.$ Thus $\lambda_j = 1 \,\,\forall \,\, j$ and \eqref{eq4.5} reduces to
\begin{eqnarray*}
| \psi^{\prime} \rangle &=& W ({\bm {\alpha}}) \Gamma(A) \,| e ({\bm {0}} \rangle \\
&=& e^{-\frac{1}{2} \|{\bm {\alpha}} \|^{2}} \,|e ({\bm {\alpha}})\rangle .
\end{eqnarray*}
Now \eqref{eq4.4} implies
\begin{eqnarray*}
\lefteqn{\beta (a_1^{\dagger} a_1, a_2^{\dagger} a_2, \ldots, a_n^{\dagger} a_n) \,|e ({\bm {u}}) \rangle} \\
&=& e^{\frac{1}{2} (\|{\bm {u}} \|^{2} - \|{\bm {\alpha}} \|^{2})} \,| e ({\bm {\alpha}}) \rangle,
\end{eqnarray*}
or
\begin{eqnarray*}
\lefteqn{\sum_{{\bm {k}} \in \mathbb{Z}_{+}^{n}} \frac{u_1^{k_{1}} u_2^{k_{2}} \ldots   u_n^{k_{n}} }{\sqrt{k_1!} \cdots \sqrt{k_n!}}   \beta (k_1, k_2, \ldots, k_n) \,|k_1 k_2 \ldots k_n  \rangle     } \\
&=& e^{\frac{1}{2} (\|{\bm {u}} \|^{2} - \|{\bm {\alpha}} \|^{2})}  \sum \frac{\alpha_1^{k_{1}} \alpha_2^{k_{2}} \ldots   \alpha_n^{k_{n}} }{\sqrt{k_1!} \cdots \sqrt{k_n!}}  \,| k_1 k_2 \ldots k_n \rangle.
\end{eqnarray*}
Thus
$$\beta (k_1, k_2, \ldots, k_n) = e^{\frac{1}{2} (\|{\bm {u}} \|^{2} - \|{\bm {\alpha}} \|^{2})} \left (\frac{\alpha_1}{u_1} \right )^{k_{1}} \cdots \left ( \frac{\alpha_n}{u_n} \right )^{k_{n}}. $$
Since $|\beta ({\bm {k}})| = 1$ and $u_j \neq 0 \,\forall \,j$ it follows that $|\frac{\alpha_j}{u_j}|=1$ and
$$ \beta ({\bm {k}}) = e^{i \sum\limits_{j=1}^{n} \theta_{j} k_{j}} \quad \forall \,\,{\bm {k}} \in \mathbb{Z}_{+}^{n} $$
where $\theta_j$'s are real. Thus $\beta (a_1^{\dagger} a_1, a_2^{\dagger} a_2, \ldots, a_n^{\dagger} a_n) = \Gamma (D),$ the second quantization of the diagonal unitary matrix $D= \diag (e^{i \theta_{1}},e^{i \theta_{2}}, \ldots,  e^{i \theta_{n}}).$ This completes the proof.\qed
\end{proof}

\end{document}